\newcommand{\BR}{{\mathbb{R}}}
\newcommand{\BP}{{\mathbb{P}}}
\newcommand{\cU}{{\mathcal{U}}}
\DeclareMathOperator{\orig}{orig}
\theoremstyle{plain}
\newtheorem{lma}{Lemma}[section]
\newtheorem{thm}[lma]{Theorem}
\newtheorem{prp}[lma]{Proposition}
\newtheorem{cor}[lma]{Corollary}
\theoremstyle{definition}
\newtheorem{dfn}[lma]{Definition}
\newtheorem{ex}[lma]{Example}
\newtheorem{rmr}[lma]{Remark}
\setlist[description]{font=\normalfont\itshape-\space}
\begin{document}
\title[A polynomial formula for P4P]{A polynomial formula for the perspective four points problem}
\begin{abstract}
We present a fast and accurate solution to the perspective $n$-points problem, by way of a new approach to the $n=4$ case. Our solution
hinges on a novel separation of variables: given four 3D points and four corresponding 2D points
on the camera canvas, we start by finding another
set of 3D points, sitting on the rays connecting the camera to the 2D
canvas points, so that the six pair-wise distances between these 3D points
are as close as possible to the six distances between the original 3D points. This step reduces
the perspective problem to an absolute orientation problem, which has
a solution via explicit formula. To solve the first problem we set coordinates
which are as orientation-free as possible: on the 3D points side our
coordinates are the squared distances between the points. On the 2D
canvas-points side our coordinates are the dot products of
the points after rotating one of them to sit on the optical axis. We then
derive the solution with the help of a computer algebra system. Our solution is an order of
magnitude faster than state of the art algorithms, while offering similar accuracy under
realistic noise. Moreover, our reduction to the absolute orientation problem
runs two orders of magnitude faster than other perspective problem solvers,
allowing extremely efficient seed rejection when implementing RANSAC.
\end{abstract}
\author{David Lehavi}
\email{dlehavi@gmail.com}
\author{Brian Osserman}
\email{brian.osserman@gmail.com }
\date{\today}
\subjclass{65D19, 68T45, 13-04, 14P10}
\maketitle
%
\section{Introduction}
%
The perspective $n$ point problem (PnP) goes back to the mid 19th-century, and seeks to recover the
6DoF pose of a calibrated camera given $n$ points in the world, and their images on a camera canvas.
The problem
arises as a localization problem in most computer visions problems involving both images and 3D data.
Typically in these problems one starts with a large number of pairings between 2D and 3D points, 
where unfortunately each pairing is correct with relatively low probability; one then uses many subsets
of size $3$ or $4$ of these pairings as the ``seeds'' or ``RANdom SAmple'' part of a RANSAC on
the entire set of pairings.

In this paper we propose a novel solution to the $n=4$ problem, which is an order of magnitude
faster than previously suggested solutions, while providing accuracy close to state of the art.
Moreover, our algorithm provides an accuracy measure which enables us to 
efficiently reject low-accuracy seeds and then unite compatible seeds before even solving for the
actual pose; this part runs fully two orders of magnitude faster than
existing solutions. A portion of this efficiency is due to the fact that
our algorithm consists almost entirely of evaluating algebraic formulas,
so it is nearly devoid of branches, which makes it extremely
amenable to SIMD implementation.

Our key contribution is an extremely fast algorithm which, given four marked
3D points, and four 2D points on the camera plane with corresponding markings,
reduces the perspective problem to an absolute orientation problem as follows:
one may first rotate
the 2D canvas so that the last point would be on the camera axis, then
compute the six distances between the 3D points as well as the six dot
products between the 2D points, reprojected to the rotated canvas. Next one
feeds these values into {\em explicit algebraic formulas}, yielding four
estimated depths associated to the 2D points in the rotated system. Finally,
the depths are rescaled to obtain depths relative to the original,
non-rotated camera plane. This is our intermediate output, which is in effect
another configuration of 3D points, and thus reduces the original perspective
problem to an absolute orientation problem. This reduction step runs
two orders of magnitude faster than standard algorithms.

Having achieved the intermediate output,
we can reject seeds having high error, and then unite low-error seeds
when they yield the same set of
$z$-depths for a common triple of points. This is all done prior to solving
the absolute orientation problem (using e.g. Horn's algorithm -- see
\cite{Ho}), which yields additional efficiency compared to approaches that
require solving for the pose on each seed separately. Finally, we use
Fletcher's variant of the Levenberg-Marquardt gradient descent algorithm
\cite{F} to fine-tune the solution and minimize the reprojection error.
Another benefit of uniting seeds before solving for pose is that simply by
prioritizing seeds which can be united with large numbers of other seeds,
our approach will often implicitly discard seeds which have very low
apparent error but are in fact based on mismatched points, since these
typically can't be united with as many other seeds.

The crux of why our reduction is feasible lies in the fact that we are
reducing the number of parameters involved in both the problem input, and the
problem output. Instead of directly representing the coordinates of each of
the four input 3D points and the four 2D canvas points, which would require
$(3+2)\cdot 4 = 20$ numbers, our input consists of the $12$ squared distances
and dot products described above. Likewise, our intermediate output consists
of four depths, instead of the $4+3$ numbers typically
needed to represent rotation via quaternionic representation and translation.
This concise representation of data enables us to obtain the aforementioned
explicit algebraic formula for the output in terms of the input. We obtained
this formula with the assistance of the computer algebra system
\verb|Singular|, which is in itself
remarkable, since algebra problems arising from real-world problems and
involving so many variables are usually well beyond the reach of computer
algebra packages. Even with our reduced variable count, \verb|Singular|
was unable to solve the problem in the most direct way, and we had to use
some {\em ad hoc} experimentation with different approaches in order to
obtain our ultimate formulas. We emphasize however that the computer algebra
system was used only in deriving our algorithm; our algorithm itself requires
nothing more sophisticated than evaluating multivariate polynomials and
taking square roots.

%
\section{Related work}\label{s:related}
%
The $n=3$ and $n=4$ cases are conceptually different: the case $n=3$ (which goes back to
\cite{G}) admits four pairs of solutions for
nondegenerate configurations. Since \cite{G} various ways have been suggested to solve this
case, all essentially boiling down to how to find a bi-quartic polynomial representing the
solutions.

In the case $n>3$ the perspective $n$ points problem is overdetermined, which means
that putatative solution can come with some accuracy measures which may help
fast rejection - which is useful for algorithms which come before a RANSAC algorithm.
Indeed, the prevalent algorithms for $n>3$ all come -- at least internally -- with such
a measure.
Broadly speaking, one may classify perspective $n>3$ points solvers along two dimensions:
\begin{enumerate}
\item which quantity is one trying to minimize or maximize? E.g., reprojection error, reconstruction error, error in the space of the rotations, or likelihood of the position being correct;
\item which optimization strategy is employed? E.g., finding a kernel of some matrix, least squares, or direct solution via formula.
\end{enumerate}

The most influential work in the problem domain
is ``EPnP'' \cite{LMFP}, which minimizes distances in the space of
quaternions after algebraically eliminating the translation; the optimization strategy is
finding the null space of a certain matrix.
The ``gold standard'' for accuracy with reasonable run time is ``SQPnP'' \cite{TL} which
minimizes the reconstruction error, which is the difference between the given 3D configuration, and another
putative 3D configuration lying on the rays along the 2D configuration. As far as the authors are
aware, these are the two most common algorithms for tackling the problem for $n>3$; e.g. they
are the ones supported in OpenCV.

Some other standard methods are \cite{LHM} which
optimizes error in the space of linear transformation by iterating a least squares solution,
\cite{HR} which introduces a ``relaxed''
reprojection error equation, for which the maximum likelihood of the position can be solved, and
\cite{ZKSAO} which builds a formula for minimizing
errors in the space of rotations.

In contrast, instead of trying to minimize the reconstruction error our work
achieves a formula which solves the reconstruction problem when there is
no noise. This turns out to be extremely fast, with accuracy that is broadly
comparable to EPnP and SQPnP; see \S \ref{s:experiments} below for a
detailed comparison.

%
\section{Our four-point algorithm}\label{s:algorithm}
%

We begin by describing our algorithm for solving the four-point perspective
problem. Accordingly, we suppose we have been given a quadruple 
$$(P^{\orig}_i, p^{\orig}_i)_{i=0,\dots,3}$$
of pairs where each $P^{\orig}_i$ is a point in 3D space, and $p^{\orig}_i$ is a canvas point
which has been matched to $P^{\orig}_i$. We consider the canvas as being imbedded
in space on the $z=1$ plane, and our only constraint is that 
$p^{\orig}_i \cdot p^{\orig}_3 \neq 0$ for each $i$; this arises from the normalization we
use in the derivation of our formulas.

Our goal is to find $z$-depths for $p^{\orig}_i$,
which we will denote by $z_i$, such that the distances between the $P^{\orig}_i$
agree with the distances between the $z_i p^{\orig}_i$. Our algorithm accomplishes
this in the following steps:

\begin{enumerate}
\item compute new coordinates for the tuple $(P^{\orig}_i,p^{\orig}_i)_i$ in terms of
distances and dot products as follows:
\begin{align*} a_i & = ||P^{\orig}_j - P^{\orig}_k||^2, \quad
	& b_i = \frac{(p^{\orig}_i \cdot p^{\orig}_i)(p^{\orig}_3 \cdot p^{\orig}_3)}{(p^{\orig}_i \cdot p^{\orig}_3)^2}, \\
c_i & = ||P^{\orig}_i - P^{\orig}_3||^2, \quad
	& d_i = \frac{(p^{\orig}_j \cdot p^{\orig}_k)(p^{\orig}_3 \cdot p^{\orig}_3)}{(p^{\orig}_j \cdot p^{\orig}_3)(p^{\orig}_k \cdot p^{\orig}_3)},
\end{align*}
where $i,j,k$ are all between $0$ and $2$ and the $j,k$ are determined by 
{\em cyclic indexing}:
$$j\equiv i + 1\mod3,\qquad k\equiv j + 1\mod 3;$$
\item use the $a_i, b_i, c_i, d_i$ and the formulas in
Appendix \ref{app:coef-formulas} to compute coefficients $X_{i,j}$ for
$i=0,1,2,3$, $j=0,1,2$;
\item use the $X_{i,j}$ to create quadratic polynomials
$$Q_i(x)=X_{i,2}\cdot x^2+X_{i,1}\cdot x+X_{i,0} \quad \text{for }
0\leq i \leq3,$$
and compute all 16 quadruples $(\alpha_0, \alpha_1, \alpha_2, \alpha_3)$
where each $\alpha_i$ is a root of $Q_i$;
\item for each of the above quadruples, let $z_3 = \sqrt{\alpha_3}$ and
for $i<3$ set $z_i = \pm \sqrt{\alpha_i}$ where the sign is chosen to agree
with the sign of $p^{\orig}_i \cdot p^{\orig}_3$;
\item among the 16 quadruples $(z_0,z_1,z_2,z_3)$, choose the one which
minimizes the error in the equations
$$a_i=b_j z_j^2 + b_k z_k^2 - 2 d_i z_j z_k
\qquad c_i=z_3^2 + b_i z_i^2 - 2 z_i z_3,$$
for $i=0,1,2$, with $j,k$ derived from $i$ as above;
\item set
$$z^{\orig}_i = \frac{||p^{\orig}_3||}{p^{\orig}_i \cdot p^{\orig}_3} z_i$$
for $i=0,1,2,3$.
\end{enumerate}

Over the next two sections, we will explain where this algorithm comes from,
and why it works to produce the claimed $z$-depths.

%
\section{Setting up the equations for the noiseless case}\label{s:first-equations}
%

In order to work toward our goal of an explicit formula for solving
the noiseless case of the perspective problem, we first have to set up
equations that govern the situation.
To say that the $n=4$ case of the perspective problem is overdetermined
means that among all possible sets of quadruples of points in $\BR^3$
together with quadruples of points on an image plane, only a
smaller-dimensional subset has the property that the 3D points can be
matched up with the image plane points after a suitable rigid
transformation. It turns out that this subset is an algebraic variety: i.e.,
it can be defined in terms of some polynomial equations. Finding one or
more of these equations would already be extremely helpful in the
perspective problem, as it would in principle allow for extremely efficient
seed rejection without attempting to solve for pose. Ultimately, we will
see that this constellation of ideas leads us to efficient
algorithms for solving for the pose as well. With this motivation,
in this section we set up our situation from the point of view of algebraic
geometry, seeing that the basic constraints arising in the perspective
problem can be expressed as polynomial equations
when we replace our initial tuples of points with tuples of distances and
dot products.
For this purpose, it is cleaner to work with lines through the origin (i.e.,
as points in the projective plane) in place of points on an image plane.

Our approach is motivated by the following basic fact, which is easily
verified but we recall here for convenience:

\begin{prp}\label{prp:tetrahedron} A (possibly degenerate) tetrahedron
in $\BR^3$ is determined uniquely up to rigid transformation by the
set of distances between the vertices.
\end{prp}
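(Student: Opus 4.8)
The plan is to prove this by an explicit reconstruction argument: given the six pairwise distances, exhibit coordinates for the four vertices, and then show any two such coordinate assignments differ by a rigid motion. Label the vertices $V_0, V_1, V_2, V_3$ and write $r_{ij} = \|V_i - V_j\|$ for the given distances. First I would place $V_0$ at the origin and $V_1$ along the positive $x$-axis at distance $r_{01}$; this uses up the translational freedom and part of the rotational freedom. Next I would place $V_2$ in the $xy$-plane with $y \geq 0$: its two coordinates are pinned down by the two conditions $\|V_2\| = r_{02}$ and $\|V_2 - V_1\| = r_{12}$, which is a linear system for the coordinates after subtracting the two squared-distance equations, followed by one square root for the remaining coordinate. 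Finally I would solve for $V_3$ from the three conditions $\|V_3 - V_i\| = r_{i3}$, $i=0,1,2$: subtracting pairs of squared-distance equations again linearizes the problem, determining the $x$- and $y$-coordinates of $V_3$ (assuming $V_0,V_1,V_2$ are affinely independent; the degenerate cases are handled separately), and then the $z$-coordinate up to sign from $\|V_3\| = r_{03}$. This produces a configuration realizing the given distances whenever one exists, and shows the only remaining ambiguity is the sign of $V_3$'s $z$-coordinate, i.e. a reflection.

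For the uniqueness-up-to-rigid-motion statement, I would argue as follows. Suppose $(V_i)$ and $(W_i)$ are two tetrahedra with the same six distances. Apply a rigid motion to $(W_i)$ so that $W_0 = V_0 = 0$, $W_1$ lies on the positive $x$-axis (possible since $\|W_1\| = r_{01} = \|V_1\|$), and $W_2$ lies in the $xy$-plane with nonnegative $y$-coordinate. By the determinacy established above, $W_0 = V_0$, $W_1 = V_1$, and $W_2 = V_2$ exactly, and $W_3$ agrees with $V_3$ up to the sign of the $z$-coordinate. If the signs disagree, apply the reflection $z \mapsto -z$, which fixes $V_0, V_1, V_2$ (they lie in the $xy$-plane) and flips $V_3$ into $W_3$. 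Composing, we have exhibited a rigid transformation (in $O(3) \ltimes \BR^3$) carrying $(V_i)$ to $(W_i)$. In the non-degenerate case one can further check the reflection is unnecessary if we only track the unoriented congruence class, which is all the proposition claims; I would phrase "rigid transformation" to include reflections, or equivalently note that a tetrahedron and its mirror image have the same vertex distances, so the statement is about congruence, not orientation-preserving congruence.

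The only real subtlety — and the step I would be most careful about — is the treatment of degenerate configurations, since the proposition explicitly allows them. If $V_0, V_1, V_2$ are collinear, the linear system determining $V_3$'s first two coordinates is rank-deficient, and one must argue separately (e.g. by treating the whole configuration as lying in a plane or on a line and inducting on dimension, or by a limiting argument from the non-degenerate case). Similarly if $V_0 = V_1$ the initial placement must be adjusted. These cases are routine but need to be enumerated rather than waved away; everything else is elementary linear algebra plus the observation that each square root introduces exactly a sign ambiguity that is absorbed by a reflection fixing the already-placed points.
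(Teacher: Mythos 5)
The paper does not actually prove this proposition---it is introduced as a ``basic fact, which is easily verified''---so there is no argument of the authors' to compare against; your coordinate-placement proof is the standard one and is correct in substance. Two remarks. First, your reading of ``rigid transformation'' as the full Euclidean group, including reflections, is the right one and is consistent with the paper, which later refers to ``the 6-dimensional group $E(3)$ of rigid transformations''; without reflections the statement would be false for non-degenerate tetrahedra, so this clarification is worth keeping. Second, the degenerate cases you flag (collinear or coincident base points) can be dispatched uniformly rather than by enumeration: the squared distances determine the Gram matrix of the vectors $V_i-V_0$ via the polarization identity
$\langle V_i-V_0,\,V_j-V_0\rangle=\tfrac12\left(r_{0i}^2+r_{0j}^2-r_{ij}^2\right)$,
and two tuples of vectors in $\BR^3$ with the same Gram matrix differ by an orthogonal transformation of $\BR^3$ regardless of the rank of that matrix. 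Composing with the translation taking $W_0$ to $V_0$ then gives the desired element of $E(3)$ in all cases at once, replacing both the square-root sign bookkeeping and the separate treatment of degenerate configurations.
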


With this motivation in mind, the particular coordinates we will work with
have already been defined above,
but we reiterate here, again using cyclic indexing as before:

Define a map
\begin{equation}\label{eq:pts-dists}
(\BR^3)^4 \to \BR^6_{(c_0, c_1, c_2, a_0, a_1, a_2)}
\end{equation}
by setting
$$c_i = ||P_i - P_3||^2, \quad a_i = ||P_j - P_k||^2$$
for each $i$. Note that because the map is defined in terms of distances,
it is invariant under any rigid transformation applied simultaneously to
the quadruple of points.

Next, let 
$$\cU_0 \subseteq (\BR\BP^2)^4$$
be the open subset consisting of quadruples 
$(L_0,L_1,L_2,L_3)$ of lines in $\BR^3$ through
the origin such that none of
the $L_i$ is orthogonal to $L_3$. We also define a map
\begin{equation}\label{eq:lines-dists}
\cU_0 \to \BR^6_{(b_0, b_1, b_2, d_0, d_1, d_2)}
\end{equation}
by setting
$$b_i = \frac{(L_i \cdot L_i)(L_3 \cdot L_3)}{(L_i \cdot L_3)^2},\quad
d_i = \frac{(L_j \cdot L_k)(L_3 \cdot L_3)}{(L_j \cdot L_3)(L_k \cdot L_3)}$$
for each $i$.
To evaluate the above, we have to choose a nonzero representative on each
line for the individual dot products to make sense, but
note that the formulas are invariant under scaling each $L_i$, so the result
is independent of the choice of representatives. 

To further explain the definitions of the $b_i$ and $d_i$, note that the
formulas are invariant under simultaneous rotation. If we rotate all the $L_i$
together so that $L_3$ lies along the optical axis, and let $p_i$ be the 
point on the $z=1$ canvas obtained from each of the rotated $L_i$, we will
have $p_3=(0,0,1)$ and $p_i \cdot p_3 = 1$ for each $i$, so we obtain
simpler formulas: $b_i = p_i \cdot p_i$ and $d_i = p_j \cdot p_k$ for
each $i$.

The main result of the section is then that using the coordinates we have
introduced, if we introduce four additional variables we can obtain algebraic
constraints that must be satisfied by any tuple coming from the perspective
problem. This result also explains the meaning of the $z_i$ we introduced
in \S \ref{s:algorithm} above.

\begin{prp} \label{prp:equations}
Suppose that 
$(P^{\orig}_0,P^{\orig}_1,P^{\orig}_2,P^{\orig}_3,L^{\orig}_0,L^{\orig}_1,L^{\orig}_2,L^{\orig}_3)
\in \left(\BR^3\right)^4 \times \left(\BR\BP^2\right)^4$
has the properties that none of the $L^{\orig}_i$ is orthogonal to $L^{\orig}_3$, and there
exists a rigid transformation $T$ such that for each $i$, we have that
$T(P^{\orig}_i) \neq 0$ and $T(P^{\orig}_i)$ lies on $L^{\orig}_i$. Let
$(c_0,c_1,c_2,a_0,a_1,a_2,b_0,b_1,b_2,d_0,d_1,d_2)$ be the image of this tuple
under \eqref{eq:pts-dists} $\times$ \eqref{eq:lines-dists}.
Then there exist nonzero $z_0,z_1,z_2,z_3$ such that the following equations
are satisfied:
\begin{equation}\label{e:incidence}
a_i=b_j z_j^2 + b_k z_k^2 - 2 d_i z_j z_k
\qquad c_i=z_3^2 + b_i z_i^2 - 2 z_i z_3
\end{equation}
for $i=0,1,2$, and again using cyclic indexing for $j,k$.

More precisely:
\begin{enumerate} 
\item the desired $z_i$ can be obtained as the $z$-depths of the $T(P^{\orig}_i)$ 
relative to the image plane at distance $1$ from the origin, and with $L^{\orig}_3$ 
as its optical axis;
\item if $p_i$ denotes the image of $L^{\orig}_i$ on the above-mentioned image plane
for each $i$, then for a given choice of $z_i$ all the equations
\eqref{e:incidence} will be satisfied precisely if the condition
$$||P^{\orig}_i-P^{\orig}_j||^2 = ||z_i p_i - z_j p_j||^2$$
is satisfied for all $i,j$.
\end{enumerate}
\end{prp}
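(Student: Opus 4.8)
The plan is to realize the configuration concretely and then reduce both parts to a one-line expansion of squared distances; the orthogonality hypothesis enters only to make certain quantities well-defined and nonzero. First I would fix notation for the image plane: let $e$ be the unit vector along $L^{\orig}_3$ for which the image plane in question is $H=\{q\in\BR^3 : q\cdot e = 1\}$, and for $i=0,1,2,3$ let $p_i := L^{\orig}_i\cap H$. Each $p_i$ is a single well-defined point precisely because $L^{\orig}_i$ is not orthogonal to $L^{\orig}_3$, and by construction $p_i\cdot e = 1$ for every $i$, while $p_3 = e$. Choosing $p_i$ as the representative of $L^{\orig}_i$ in the formulas for $b_i,d_i$ (legitimate since those formulas are scaling-invariant) and using $p_i\cdot p_3 = p_3\cdot p_3 = 1$, I recover the simplified identities $b_i = p_i\cdot p_i$ and $d_i = p_j\cdot p_k$ noted in the discussion preceding the statement.

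Next I would set $P_i := T(P^{\orig}_i)$ and let $z_i := P_i\cdot e$ be the $z$-depth of $P_i$ with respect to $H$. Since $P_i$ lies on the line $L^{\orig}_i = \BR p_i$, it is a scalar multiple of $p_i$; dotting with $e$ identifies that scalar as $z_i$, so $P_i = z_i p_i$. If any $z_i$ were zero then $P_i = z_i p_i = 0$, contradicting $T(P^{\orig}_i)\neq 0$; hence all $z_i$ are nonzero. This already gives claim (1). Now, since rigid transformations preserve distances, the values $a_i,c_i$ computed from the $P^{\orig}_i$ equal those computed from the $P_i$, so expanding the norms and substituting $P_i = z_i p_i$ together with the identities above gives
\begin{align*}
a_i &= ||P_j-P_k||^2 = z_j^2(p_j\cdot p_j) + z_k^2(p_k\cdot p_k) - 2z_j z_k(p_j\cdot p_k) = b_j z_j^2 + b_k z_k^2 - 2 d_i z_j z_k,\\
c_i &= ||P_i-P_3||^2 = z_i^2(p_i\cdot p_i) + z_3^2(p_3\cdot p_3) - 2 z_i z_3(p_i\cdot p_3) = b_i z_i^2 + z_3^2 - 2 z_i z_3,
\end{align*}
for $i=0,1,2$, which are exactly the equations \eqref{e:incidence}. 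This proves the existence statement.

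For claim (2) I would observe that the two displayed computations are purely formal identities in the $z$'s: for \emph{any} real tuple $(z_0,z_1,z_2,z_3)$, the relations $p_i\cdot p_i = b_i$, $p_i\cdot p_3 = p_3\cdot p_3 = 1$, and $p_j\cdot p_k = d_i$ yield $||z_j p_j - z_k p_k||^2 = b_j z_j^2 + b_k z_k^2 - 2 d_i z_j z_k$ and $||z_i p_i - z_3 p_3||^2 = b_i z_i^2 + z_3^2 - 2 z_i z_3$. Since $a_i = ||P^{\orig}_j - P^{\orig}_k||^2$ and $c_i = ||P^{\orig}_i - P^{\orig}_3||^2$, the first equation of \eqref{e:incidence} for a given $i$ is therefore equivalent to $||P^{\orig}_j - P^{\orig}_k||^2 = ||z_j p_j - z_k p_k||^2$, and the second to $||P^{\orig}_i - P^{\orig}_3||^2 = ||z_i p_i - z_3 p_3||^2$. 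As $i$ runs over $0,1,2$ the cyclically-indexed pairs $\{j,k\}$ are $\{1,2\},\{2,0\},\{0,1\}$ and the pairs $\{i,3\}$ are $\{0,3\},\{1,3\},\{2,3\}$, which together with the trivial diagonal cases $i=j$ exhaust all pairs of indices in $\{0,1,2,3\}$; hence the full system \eqref{e:incidence} holds if and only if $||P^{\orig}_i - P^{\orig}_j||^2 = ||z_i p_i - z_j p_j||^2$ for all $i,j$. I do not expect a genuine obstacle here: the argument is essentially bookkeeping, and the only things requiring care are keeping the cyclic indexing that ties $a_i$ to $d_i$ and to the pair $(p_j,p_k)$ consistent, and pinpointing the single place the orthogonality hypothesis is used — well-definedness of the $p_i$, whence non-vanishing of the $z_i$.
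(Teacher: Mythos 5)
Your proof is correct and follows essentially the same route as the paper's: the paper rotates the configuration so that $L^{\orig}_3$ becomes the optical axis and works in the $z=1$ plane, whereas you work intrinsically with the plane $\{q : q\cdot e=1\}$, but the identification $P_i=z_ip_i$ and the expansion of the squared distances are identical. Your explicit treatment of claim (2) and of the exhaustion of all index pairs is a welcome bit of bookkeeping that the paper leaves implicit.
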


Note that in the last two statements, the $z_i$ are only determined up to
simultaneous sign change, since there are two possibilities for the image
plane described in the proposition. However, simultaneous sign change does not affect the equations
\eqref{e:incidence}.

\begin{proof} Since the map \eqref{eq:lines-dists} is invariant under 
simultaneous rotation, we may apply a rotation $R$ to the $L^{\orig}_i$ so that 
$L^{\orig}_3$ agrees with the optical axis without changing any of the $b_i, d_i$. 
Similarly, since the map \eqref{eq:pts-dists} is invariant under rigid
transformations, we may apply $T$ and then $R$ to the $P^{\orig}_i$ without changing
any of the $a_i, c_i$. That is, without changing any of the coordinates of
our proposed equations, we could have assumed that $P^{\orig}_i$ lies on $L^{\orig}_i$ for
each $i$, and that $L^{\orig}_3$ is the optical axis. Now, if we let 
$p_0,p_1,p_2,p_3=(0,0,1)$ be the representatives of the $L^{\orig}_i$ on the $z=1$
plane, there exist nonzero $z_i$ so that $P^{\orig}_i = z_i p_i$ for each $i$. These
are visibly the asserted $z$-depths, so it is enough to check that the
desired equations must all be satisfied.

But following through the definitions we have
\begin{align*} a_i = ||P^{\orig}_j - P^{\orig}_k||^2 
& = (z_j p_j - z_k p_k) \cdot (z_j p_j - z_k p_k) \\
& = z_j^2 b_j + z_k^2 b_k - 2 z_j z_k d_i,
\end{align*}
and noting that $p_3 \cdot p_i = 1$ for all $i$,
\begin{align*} c_i = ||P^{\orig}_i - P^{\orig}_3||^2 
& = (z_i p_i - z_3 p_3) \cdot (z_i p_i - z_3 p_3) \\
& = z_i^2 b_i + z_3^2 - 2 z_i z_3,
\end{align*}
as claimed.
\end{proof}

For completeness, we also record how to transform the $z_i$ above into
$z$-depths for the original canvas, justifying the final algorithm step
in \S \ref{s:algorithm}:

\begin{prp}\label{prp:z-transform} In the situation of \Cref{prp:equations},
if we let $p^{\orig}_i$ be the representatives of each $L^{\orig}_i$ on 
the usual canvas at $z=1$, and fix the choice of signs of the $z_i$ so that
$z_3>0$, then the $z$-depths of each $T(P^{\orig}_i)$ with respect 
to this canvas are given by 
$$\frac{||p^{\orig}_3||}{p^{\orig}_i \cdot p^{\orig}_3} z_i.$$
\end{prp}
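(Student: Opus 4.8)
The plan is to deduce this directly from \Cref{prp:equations} by expressing both kinds of $z$-depth as orthogonal projections of $T(P^{\orig}_i)$ onto unit vectors, and then comparing the two unit vectors. Throughout I work with the configuration already transformed so that $T(P^{\orig}_i)$ lies on $L^{\orig}_i$, exactly as in the proof of \Cref{prp:equations}.

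First I would set up notation for the rotated picture. Let $u$ be the unit vector spanning $L^{\orig}_3$ with $T(P^{\orig}_3)\cdot u>0$; this is precisely the optical-axis direction of the image plane appearing in \Cref{prp:equations} once the normalization $z_3>0$ is imposed. Unwinding that proof — where a rotation $R$ sending $u$ to $e_3:=(0,0,1)$ is applied and $z_i$ is read off as the third coordinate of $R(T(P^{\orig}_i))$ — one gets the clean identity $z_i = R(T(P^{\orig}_i))\cdot e_3 = T(P^{\orig}_i)\cdot u$ for each $i$; that is, the rotated $z$-depth is just the component of $T(P^{\orig}_i)$ along $u$.

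Next I would do the same for the usual canvas. Since $p^{\orig}_i$ is the representative of $L^{\orig}_i$ on the $z=1$ plane, it has third coordinate $1$; since $T(P^{\orig}_i)$ also lies on $L^{\orig}_i$, there is a scalar $\zeta_i$ with $T(P^{\orig}_i)=\zeta_i p^{\orig}_i$, and $\zeta_i$ is exactly the $z$-depth of $T(P^{\orig}_i)$ with respect to the usual canvas (namely its third coordinate). Feeding this into the previous identity gives $z_i=\zeta_i\,(p^{\orig}_i\cdot u)$.

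Finally I would pin down $u$. Both $u$ and $p^{\orig}_3/||p^{\orig}_3||$ are unit vectors along $L^{\orig}_3$, so $u=\pm p^{\orig}_3/||p^{\orig}_3||$; and because $p^{\orig}_3$ — lying on the physical canvas at $z=+1$ — is on the same ray out of the origin as $T(P^{\orig}_3)$, while $T(P^{\orig}_3)\cdot u>0$ by construction, the sign is $+$. Substituting $u=p^{\orig}_3/||p^{\orig}_3||$ into $z_i=\zeta_i\,(p^{\orig}_i\cdot u)$ and solving for $\zeta_i$ (valid since $p^{\orig}_i\cdot p^{\orig}_3\neq 0$ by hypothesis) produces exactly $\zeta_i=\frac{||p^{\orig}_3||}{p^{\orig}_i\cdot p^{\orig}_3}\,z_i$. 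The only genuinely delicate step is this last sign determination, i.e.\ correctly matching the orientation implicit in the choice $z_3>0$ with the orientation of the physical canvas; everything else is routine bookkeeping, and I expect the write-up to spend most of its words carefully unwinding the normalizations in \Cref{prp:equations} rather than on any real computation.
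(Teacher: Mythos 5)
Your proposal is correct and follows essentially the same route as the paper: both express the rotated $z$-depth $z_i$ as $T(P^{\orig}_i)\cdot\bigl(p^{\orig}_3/||p^{\orig}_3||\bigr)$, express the usual $z$-depth as the scalar $\zeta_i$ with $T(P^{\orig}_i)=\zeta_i p^{\orig}_i$, compare via the dot product with $p^{\orig}_3$, and resolve the sign ambiguity by checking $i=3$ using the fact that the points lie in front of the camera together with the normalization $z_3>0$. No gaps.
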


Note that in contrast to the previous case, the $z$-depths of
\Cref{prp:z-transform} must all be positive, since in the original
perspective problem all the $P^{\orig}_i$ ought to have been in front of the
camera. 

\begin{proof} This is the same as saying that
$$T(P^{\orig}_i) = \frac{||p^{\orig}_3||}{p^{\orig}_i \cdot p^{\orig}_3} z_i p^{\orig}_i;$$
since we are just trying to compute a scalar, and since $p^{\orig}_i \cdot p^{\orig}_3$ is
nonzero by hypothesis, it is enough to check this after taking the dot 
product with $p^{\orig}_3$ and cancelling the denominator, which is to say it is enough
to show that 
$$T(P^{\orig}_i) \cdot p^{\orig}_3 = ||p^{\orig}_3|| z_i.$$

On the other hand, saying that $z_i$ is the $z$-depth of $T(P^{\orig}_i)$ relative
to the canvas with $L^{\orig}_3$ as its optical axis means that $z_i$ can be
obtained as the dot product with the normalization of any representative of
$L^{\orig}_3$; that is, we have 
$$z_i = T(P^{\orig}_i) \cdot \frac{p^{\orig}_3}{||p^{\orig}_3||},$$
which is what we wanted. We need only address the ambiguity of choice of
signs, but since
the only possibility is replacing $p^{\orig}_3$ by $-p^{\orig}_3$ as the representative for
$L^{\orig}_3$, it suffices to check that we have the right sign in the above
equation for $i=3$. Moreover, $T(P^{\orig}_3) \cdot p^{\orig}_3>0$ since both points lie in
front of the camera by hypothesis, so the fact that we fixed signs so that
$z_3>0$ means that the above equation has the correct sign, as desired.
\end{proof}

We conclude this section with a couple of remarks regarding mathematical
context for our work; these may be skipped by the reader who is only
interested in understanding our algorithm.

\begin{rmr}\label{rmr:cayley-menger} In order to better understand the images
of the maps \eqref{eq:pts-dists} and \eqref{eq:lines-dists}, it is helpful to
recall the notion of {\em Cayley-Menger determinants}. Given a quadruple
$(P_0,P_1,P_2,P_3)$ of points in $\BR^3$, the associated Cayley-Menger
determinant is given by
\[\left|\begin{smallmatrix}
  0&||P_0-P_1||^2&||P_0-P_2||^2&||P_0-P_3||^2&1\\
  ||P_0-P_1||^2&0&||P_1-P_2||^2&||P_1-P_3||^2&1\\
  ||P_0-P_2||^2&||P_1-P_2||^2&0&||P_2-P_3||^2&1\\
  ||P_0-P_3||^2&||P_1-P_3||^2&||P_2-P_3||^2&0&1\\
  1&1&1&1&0
\end{smallmatrix}\right|,\]
and -- up to multiplying by a positive scalar -- gives a formula for the
square of the volume of the tetrahedron spanned by the $P_i$.
It turns out that the image of \eqref{eq:pts-dists} is described precisely
as the set of $6$-tuples of prospective squared distances which are
nonnegative, whose square roots satisfy all the appropriate triangle
inequalities,\footnote{allowing for non-strict inequalities since we have not required
the $P_i$ to be distinct.} and which satisfy the condition that the
Cayley-Menger determinant is nonnegative. The first and last conditions
are explicitly semialgebraic in the $a_{\bullet}$ and $c_{\bullet}$,
and although each individual triangle inequality is not algebraic in
these coordinates, each triple of triangle inequalities coming from a fixed
subset of $\{0,1,2,3\}$ can be expressed in terms of a $2$-dimensional
Cayley-Menger determinant, so imposes a semialgebraic condition.

Turning to \eqref{eq:lines-dists}, because the $p^{\orig}_i$ 
are coplanar, the Cayley-Menger determinant with $p^{\orig}_i$ in place
of $P_i$ must vanish, which in our coordinates translates to the condition
that 
\begin{equation}\label{planarCM}
  \left|\begin{smallmatrix}
  0&b_0+b_1-2d_2&b_0+b_2-2d_1&b_0-1&1\\
  b_0+b_1-2d_2&0&b_1+b_2-2d_0&b_1-1&1\\
  b_0+b_2-2d_1&b_1+b_2-2d_0&0&b_2-1&1\\
  b_0-1&b_1-1&b_2-1&0&1\\
  1&1&1&1&0
  \end{smallmatrix}\right|=0.
\end{equation}

In principle, this should be relevant to us as it will impose an additional
polynomial equation on the set of interest to us, but in practice we have not
found it to be helpful in our elimination of variables.
\end{rmr}

\begin{rmr}\label{r:nonalgebraic} In the proof of \Cref{prp:equations}, we
can re-express our equations in
an invariant way in terms of the $P^{\orig}_i$ and $L^{\orig}_i$ as follows: since
$p_3=L^{\orig}_3/||L^{\orig}_3||$ and $p_i = L^{\orig}_i ||L^{\orig}_3||/(L^{\orig}_3\cdot L^{\orig}_i)$, we find that
$P^{\orig}_i = z_i p_i$ become
$p_3 = z_3 L^{\orig}_3 / ||L^{\orig}_3||$ and $P^{\orig}_i = z_i L^{\orig}_i ||L^{\orig}_3||/(L^{\orig}_3 \cdot L^{\orig}_i)$ for
$i>0$. We can clear denominators, but are left with
the square root implicit in $||L^{\orig}_3||$ appearing in each equation.
We see that although the equation
\[P^{\orig}_i = z_i p_i\]
looks innocuous and algebraic, it is in fact not algebraic ``upstairs'' in
$$\left(\BR^3\smallsetminus (0)\right)^4 \times \left(\BR\BP^2\right)^4
\times \BR^4.$$
Thus, one advantage of using the maps \eqref{eq:pts-dists} and
\eqref{eq:lines-dists} is that they convert non-algebraic equations of
interest into algebraic equations.
\end{rmr}

%
\section{Solving the equations for the noiseless case}\label{s:solving}
%

In the previous section, we obtained algebraic constraints coming from
the perspective problem after introducing the additional variables $z_i$.
We would like to obtain constraints in the original variables, which
requires eliminating the $z_i$s from our equations. Moreover, since the
$z_i$s are in fact $z$-depths, we also see that it is very useful to
be able to solve for them in terms of the other variables: this lets us
reduce perspective problems to absolute orientation problems. In theory,
elimination of variables is straightforward and can be accomplished by
plugging our polynomial equations into a computer algebra package such as
\verb|Singular|. However, in practice any system of equations which occurs
in a real-life problem will be too intractable to solve in this way. It
turns out that our problem lies on the boundary of tractability: if we just
throw in all our equations and ask it to eliminate all the $z_i$, it will
not succeed, but by experimenting with different variations on the process
we were able to get results. The key result that drives our algorithm is
the following.

\begin{thm}\label{thm:main-qi} There exist quadratic polynomials
$$Q_i(x)=X_{i,2}\cdot x^2+X_{i,1}\cdot x+X_{i,0} \quad \text{for }
0\leq i \leq3$$
with each
$X_{i,j}$ being an explicit polynomial in
$a_{\bullet}, b_{\bullet}, c_{\bullet}, d_{\bullet}$, having the following
property:
if a point of $\BR^6 \times \BR^6 \times \BR^4$ is in the zero set
of the equations \eqref{e:incidence}, then if we evaluate 
the polynomial $Q_{i}(z_i^2)$ on the coordinates of this point we will
likewise get $0$. 

The formulas for the $X_{i,j}$ are given in \Cref{app:coef-formulas}.
\end{thm}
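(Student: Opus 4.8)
The plan is to realize each $Q_i$ as the output of an explicit elimination computation carried out in a computer algebra system, and then to verify the resulting polynomials post hoc. Concretely, I would work in the polynomial ring $\BR[a_0,a_1,a_2,b_0,b_1,b_2,c_0,c_1,c_2,d_0,d_1,d_2,z_0,z_1,z_2,z_3]$ and let $I$ be the ideal generated by the six polynomials obtained by moving everything in \eqref{e:incidence} to one side: the three ``$a$-equations'' $b_j z_j^2 + b_k z_k^2 - 2 d_i z_j z_k - a_i$ and the three ``$c$-equations'' $z_3^2 + b_i z_i^2 - 2 z_i z_3 - c_i$, for $i=0,1,2$ with cyclic $j,k$. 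The assertion of the theorem is precisely that for each $i$ there is an element of $I$ of the form $Q_i(z_i^2)$ with $Q_i$ a quadratic whose coefficients lie in the subring generated by the $a_\bullet,b_\bullet,c_\bullet,d_\bullet$; equivalently, $Q_i(z_i^2) \in I \cap \BR[a_\bullet,b_\bullet,c_\bullet,d_\bullet,z_i]$, and moreover $Q_i$ has degree at most $2$ in the variable $w_i := z_i^2$. So the natural approach is: compute (a Gröbner basis for) the elimination ideal that kills all the $z_j$ with $j \neq i$, then exhibit inside it a generator of the required shape.

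The key steps, in order: (1) set up $I$ as above; (2) for a fixed $i$, eliminate the three variables $z_j$, $j \neq i$, using an elimination order, and also eliminate one further linear appearance of $z_3$ where possible using the $c$-equations (note each $c$-equation is linear in $z_3$ after treating $z_3^2$ and $z_i^2$ as the unknowns, so the $z_3$-degree can be controlled); (3) inspect the resulting elimination ideal and locate a polynomial that depends on $z_i$ only through $z_i^2$ and is quadratic in that quantity — this is the hard part and may require the ad hoc maneuvering the authors allude to, e.g. taking suitable $S$-polynomials or resultants of pairs of the generators by hand rather than trusting a black-box \texttt{eliminate} call, and possibly pre-substituting relations such as the planar Cayley–Menger identity \eqref{planarCM} from Remark~\ref{rmr:cayley-menger}; (4) having obtained candidate polynomials $X_{i,j}$ in $a_\bullet,b_\bullet,c_\bullet,d_\bullet$, record them (this is Appendix~\ref{app:coef-formulas}); (5) \emph{verify} the theorem independently of how the formulas were found, simply by checking in \texttt{Singular} that $X_{i,2}\, z_i^4 + X_{i,1}\, z_i^2 + X_{i,0}$ reduces to $0$ modulo $I$ — i.e.\ that it lies in the ideal generated by the six equations \eqref{e:incidence}. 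Step (5) is a finite ideal-membership certificate and makes the proof rigorous regardless of the exploratory nature of steps (2)–(3).

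I expect the main obstacle to be step (3): a naive elimination of $\{z_j : j\neq i\}$ from six equations in four ``depth'' variables does not obviously yield something of degree only $2$ in $z_i^2$, and indeed the authors explicitly say the direct approach in \texttt{Singular} fails. The trick is likely to exploit the bilinear/quadratic structure: each $a$-equation is quadratic in $(z_j,z_k)$ but has no $z_i$ at all, while each $c$-equation couples $z_i$ with $z_3$; so one should be able to first use the $a$-equations plus two of the $c$-equations to eliminate $z_0,z_1,z_2$ (for $i=3$) or $z_j,z_k$ and then $z_3$ (for $i<3$) in a staged fashion, keeping careful track of degrees at each resultant step so that spurious high-degree factors can be divided out. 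Identifying and discarding those extraneous factors — and arguing that what remains genuinely has the claimed form and is not identically zero — is where the real work lies. Once the explicit $X_{i,j}$ are in hand, though, the logical content of the theorem collapses to the single mechanical verification in step (5), which I would present as the actual proof, with the derivation relegated to a remark or to the appendix.
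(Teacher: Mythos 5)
Your proposal is correct in substance and follows the same basic strategy as the paper: form the ideal $I$ generated by the six polynomials of \eqref{e:incidence}, perform a staged elimination in \texttt{Singular} (the paper eliminates $z_3$, then $z_2$, then passes to the subideal spanned by the three shortest remaining generators before the final elimination of $z_1$ --- no resultants, no discarding of extraneous factors, and no use of the Cayley--Menger relation), and observe that any element of an elimination ideal of $I$, or of an elimination ideal of a subideal of $I$, lies in $I$ and hence vanishes on the zero set of \eqref{e:incidence}; quadraticity in $z_i^2$ is checked by inspection. There are two genuine differences. First, the paper computes only $Q_0$ and $Q_3$ by elimination; $Q_1$ and $Q_2$ are \emph{defined} by applying the transpositions $\tau_{0,1}$, $\tau_{0,2}$ of the index set $\{0,1,2\}$ to $Q_0$, using the observation that such permutations permute the six generators and therefore preserve $I$ --- this is the actual logical mechanism by which membership in $I$ is established for $i=1,2$, and it is also how \Cref{app:coef-formulas} specifies $X_{1,j}$ and $X_{2,j}$. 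Your plan of running a separate elimination for each $i$ would work in principle but multiplies the computational burden on exactly the step the authors report as barely tractable. Second, your step (5) --- a post hoc ideal-membership certificate for the recorded $X_{i,j}$, independent of how they were found --- is a sound and arguably more robust way to finish; the paper instead relies on membership being automatic by construction. Neither difference is a gap; your version buys auditability of the appendix formulas, while the paper's symmetry argument buys economy and explains why only six of the twelve coefficient formulas need to be listed explicitly.
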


Put differently, the value of $z_i^2$ obtained from our point must be a root
of the quadratic polynomial $Q_i(x)$, after evaluating the coefficients of
$Q_i(x)$ at the relevant values of 
$a_{\bullet}, b_{\bullet}, c_{\bullet}, d_{\bullet}$ 
in order to obtain a usual quadratic polynomial. 

\begin{proof} The polynomial $Q_0(z_0^2)$ is obtained via the \verb|Singular|
code in \Cref{app:singular} by starting from the ideal $I$ generated by
the six polynomials of \eqref{e:incidence}, and eliminating $z_3$, then
eliminating $z_2$, passing to a smaller ideal, and finally eliminating $z_1$.
The fact that $Q_0(z_0^2)$ is quadratic in $z_0^2$ is determined by
inspection. Because $Q_0(z_0^2)$ is in $I$, it has the property that it
vanishes at any point on which all the equations of \eqref{e:incidence}
vanish, as desired. $Q_3(z_3^2)$ is obtained via analogous steps, so we also
obtain the desired statements for it.

We next make a more general observation: if $\sigma$ is any permutation of
the set $\{0,1,2\}$, and we let $\sigma$ act on the variables 
$(a_{\bullet}, b_{\bullet}, c_{\bullet}, d_{\bullet}, z_{\bullet})$
simply by permutating the indices, so that only $z_3$ is fixed, then
although $\sigma$ doesn't fix the individual equations of \eqref{e:incidence},
it does permutate them, so it maps the ideal $I$ to itself. Then for $j=1,2$,
if we take the transposition $\tau_{0,j}$ which switches $0$ and $j$,
we see that $Q_0(z_0^2)$ will be sent to a polynomial which can be expressed
in terms of the $a_{\bullet}, b_{\bullet}, c_{\bullet}, d_{\bullet}$ and
$\tau_{0,j}(z_0^2)=z_j^2$, and which is moreover still quadratic in $z_j^2$.
We thus can define $Q_j(z_j^2)$ to be the image of $Q_0(z_0^2)$ under
$\tau_{0,j}$. Moreover, since $\tau_{0,j}$ sends $I$ to $I$, we know that
$Q_j(z_j^2)$ is still in $I$, so the desired vanishing statement holds
as well.
\end{proof}

\Cref{thm:main-qi} nearly completes the explanation of our algorithm.
The most significant remaining step is justifying that minimizing the
error in the equations \eqref{e:incidence} should be effective in
finding the ``correct'' tuple of $z_i$s out of the 16 possibilities
derived from the $Q_i$s. To this end, the following is helpful:

\begin{lma}\label{lma:linear} The ideal $I$ generated by the equations of
\eqref{e:incidence} also contains elements $\ell_i$ for $i=0,\dots,3$,
which are each polynomials in the 
$a_{\bullet}, b_{\bullet}, c_{\bullet}, d_{\bullet}$ and $z_i^2$, and
linear in $z_i^2$. 

Moreover, these polynomials are nonzero when evaluated
at general points coming from the perspective problem.
\end{lma}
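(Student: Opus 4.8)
The plan is to proceed exactly as in the proof of \Cref{thm:main-qi}, but extracting a different kind of element from the elimination ideal. Recall that the six equations of \eqref{e:incidence} are each quadratic in the $z_i$, and that after introducing $u_i := z_i^2$ together with the products $w_i := z_j z_k$ (cyclic indexing) and $v_i := z_i z_3$, the system becomes \emph{linear} in the monomials $u_0,u_1,u_2,u_3,w_0,w_1,w_2,v_0,v_1,v_2$. So first I would pass to the ideal $I'$ in $\mathbb{R}[a_\bullet,b_\bullet,c_\bullet,d_\bullet][u_\bullet,w_\bullet,v_\bullet]$ obtained by writing the six relations \eqref{e:incidence} in these variables and adjoining the obvious Plücker-type syzygies among the $u,w,v$ (e.g. $w_i^2 = u_j u_k$, $v_i^2 = u_i u_3$, $w_i v_i = u_i\, v_j$, etc.). The six genuinely new relations \eqref{e:incidence} are linear in the ten monomial variables with coefficients in the base ring, so generically they cut the monomial-variable space down to dimension $4$; eliminating four of the $w$'s and $v$'s from these six linear equations (pure linear algebra over the fraction field) leaves linear relations expressing, say, $w_0,w_1,w_2,v_0$ in terms of $u_0,u_1,u_2,u_3,v_1,v_2$, and then two further linear relations purely among $u_0,u_1,u_2,u_3$ after using a couple of the quadratic syzygies — but that is not yet what we want, since we want one relation isolating a \emph{single} $u_i$.

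The mechanism that actually isolates $u_i = z_i^2$ is the same permutation trick already used for the $Q_i$: by construction $Q_0(z_0^2) \in I$ is quadratic in $z_0^2$, so write $Q_0(z_0^2) = X_{0,2} z_0^4 + X_{0,1} z_0^2 + X_{0,0}$. Multiplying the first incidence relation $a_0 = b_1 z_1^2 + b_2 z_2^2 - 2 d_0 z_1 z_2$ and the analogous ones, one can re-express $z_1^2, z_2^2, z_3^2$ (and the cross terms) back in terms of $z_0^2$ using the linear relations from the previous paragraph; substituting these into $Q_0$ would collapse the $z_0^4$ term, because a quadratic relation in $z_0^2$ that lies in $I$ together with the linear-in-monomials structure forces the leading coefficient to be itself a multiple, in $I$, of a linear-in-$z_0^2$ element. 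Concretely, I would look for $\ell_0$ as a $\mathbb{R}[a_\bullet,\dots,d_\bullet]$-linear combination of $Q_0(z_0^2)$ and a product of incidence relations (one of which is linear in $z_0^2$ after the others are used to kill cross terms) engineered so that the $z_0^4$ coefficient cancels; the result is then linear in $z_0^2$ and still in $I$. Then $\ell_j$ for $j = 1, 2$ is obtained by applying the transposition $\tau_{0,j}$, exactly as in \Cref{thm:main-qi}, and $\ell_3$ is produced by the analogous elimination starting from $Q_3$. In practice, since $Q_0$ and $Q_3$ were already found with \verb|Singular| (see \Cref{app:singular}), the cleanest route is just to run one more elimination step — or a Gröbner-basis computation of $I \cap \mathbb{R}[a_\bullet,b_\bullet,c_\bullet,d_\bullet][z_0^2]$ truncated to degree $1$ in $z_0^2$ — and record the linear generator that appears; the formula can then be checked by hand to lie in $I$ by exhibiting the explicit cofactors, making the proof self-contained and independent of the CAS.

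For the second assertion — that $\ell_i \neq 0$ at general points coming from the perspective problem — I would argue by a dimension/genericity count rather than by inspecting the (large) explicit coefficients. The locus of tuples coming from the perspective problem is an irreducible constructible subset $V$ of $\BR^6 \times \BR^6 \times \BR^4$ of the expected dimension (it is the image of the parametrization by a configuration of $3$D points, lines, and the resulting $z$-depths, modulo rigid motions), and $\ell_i$ is a fixed polynomial; so either $\ell_i$ vanishes identically on $V$, or it is nonzero on a dense open subset of $V$, which is exactly the claim. To rule out identical vanishing it is enough to exhibit one explicit configuration — e.g. a generic tetrahedron viewed from a generic camera, with rationally chosen coordinates — at which $\ell_i$ evaluates to something nonzero; this is a single numerical check. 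The only real subtlety is making sure the chosen test configuration is not accidentally special (e.g. avoid coplanar $3$D points, avoid $p_i \cdot p_3 = 0$), so I would pick a configuration in ``general position'' and perturb if necessary. I expect the main obstacle to be bookkeeping: producing the explicit cofactor certificate showing $\ell_i \in I$, since the coefficients $X_{i,j}$ are already unwieldy and $\ell_i$ will be comparable in size; but conceptually nothing beyond the elimination and permutation ideas already in \Cref{thm:main-qi} is needed.
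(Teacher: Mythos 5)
Your treatment of the second assertion (nonvanishing on general perspective tuples) matches the paper: irreducibility of the relevant locus reduces the claim to exhibiting one explicit configuration where the $\ell_i$ evaluate to nonzero linear equations, which is exactly what the paper's \Cref{ex:example} does. The gap is in the first part, the actual production of $\ell_0$. You invoke ``the same permutation trick,'' but the only transpositions you actually deploy are the $\tau_{0,j}$ that \emph{move} the index $0$; that is the mechanism of \Cref{thm:main-qi} for transporting a relation in $z_0^2$ to one in $z_j^2$, and it cannot lower the degree. The step that does the work in the paper is different: the transposition $\tau_{1,2}$, which \emph{fixes} the indices $0$ and $3$, permutes the six generators of \eqref{e:incidence} and hence maps $I$ to itself, but does not fix the coefficients $X_{0,j}$; therefore $\tau_{1,2}Q_0(z_0^2)$ is a \emph{second} element of $I$ that is again quadratic in $z_0^2$ with (generically) different coefficients, and the cross-multiplied combination $\tau_{1,2}(X_{0,2})\,Q_0(z_0^2)-X_{0,2}\,\tau_{1,2}Q_0(z_0^2)$ lies in $I$ and is linear in $z_0^2$; the remaining $\ell_i$ are obtained analogously with index-fixing transpositions. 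Nothing in your write-up supplies this second quadratic.

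What you offer instead does not close the hole. The claim that substituting linear relations ``into $Q_0$ would collapse the $z_0^4$ term'' is unsupported ($Q_0(z_0^2)$ contains no other $z$-variables to substitute for), and the assertion that the linear-in-monomials structure ``forces the leading coefficient to be itself a multiple, in $I$, of a linear-in-$z_0^2$ element'' is essentially the statement to be proved, not an argument for it. Likewise, proposing to run one more elimination, or to read off ``the linear generator that appears'' in a Gr\"obner basis of $I\cap\BR[a_\bullet,b_\bullet,c_\bullet,d_\bullet][z_0]$, presupposes that a degree-one-in-$z_0^2$ generator exists, which is the content of the lemma; the paper moreover reports that direct eliminations of this kind hit a computational wall. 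The fix is short: use an index-fixing transposition to manufacture the second quadratic and cancel leading terms, with the explicit example also certifying that the two quadratics are not proportional, so that the resulting $\ell_0$ is genuinely linear in $z_0^2$ rather than identically zero.
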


\begin{proof}
The idea for obtaining linear expressions is simple enough: 
we apply the transposition $\tau_{1,2}$ to the coefficients of $Q_0$ in
the same manner as we derived $Q_1$ and $Q_2$ from $Q_0$. But because
$\tau_{1,2}$ keeps $0$ fixed, we will instead get a second quadratic
expression for $z_0^2$, and we can take a combination of these two to
cancel the leading terms and obtain a linear expression for $z_0^2$,
which will be our $\ell_0$.

Similarly, we can then obtain the remaining linear expressions analogously:
$\ell_1$ comes from applying $\tau_{0,2}$ to $Q_1$ and cancelling, $\ell_2$
from applying $\tau_{0,1}$ to $Q_2$ and cancelling, and $\ell_3$ from
applying $\tau_{1,2}$ to $Q_3$ and cancelling.

It remains to justify that our $\ell_i$ do not vanish identically on
points coming from perspective problem tuples, but this is addressed by
the following example.
\end{proof}

\begin{ex}\label{ex:example} Consider the example where we are given
$$P^{\orig}_0 = (0,0,0), \quad P^{\orig}_1 = (1,0,0), \quad
P^{\orig}_2 = (1,1,0), \quad P^{\orig}_3 = (0,0,3),$$
and canvas points
$$(2,1,1), \quad\scriptstyle{\left(\frac{17}{13},\frac{9}{13},1\right)}, \quad
{\scriptstyle\left(\frac{11}{15},\frac{4}{5},1\right)}, \quad
{\scriptstyle\left(\frac{1}{2},-\frac{11}{16},1\right)},$$
so that the lines $L^{\orig}_i$ generated by these are
$$L^{\orig}_0=t(2,1,1), \quad L^{\orig}_1=t(17,9,13), \quad
L^{\orig}_2=t(11,12,15), \quad
L^{\orig}_3=t(8,-11,16).$$
Then
$a_0=||P^{\orig}_1-P^{\orig}_2||^2=1$,
$a_1=||P^{\orig}_0-P^{\orig}_2||^2=2$,
$a_2=||P^{\orig}_0-P^{\orig}_1||^2=1$,
$c_0=||P^{\orig}_0-P^{\orig}_3||^2=9$,
$c_1=||P^{\orig}_1-P^{\orig}_3||^2=10$, and
$c_2=||P^{\orig}_2-P^{\orig}_3||^2=11$.
We also compute that $b_0=6$, $b_1=99/25$, $b_2=45/8$, $d_0=9/2$, $d_1=21/4$,
and $d_2=24/5$ (note that the above canvas points are not normalized to get
$p_3=(0,0,1)$, so we cannot apply the simplified dot product formulas to
get the $b_i$ and $d_i$; rather, we use the invariant forms in terms of
the lines $L^{\orig}_i$).

If we calculate our linear equations for the $z_i^2$,
we obtain the following equations:
\footnote{We did this calculation by using \texttt{Singular} to evaluate
the coefficients of each $Q_i$ and each $\tau_{j,k} Q_i$, and then to
calculate the gcd of $Q_i$ and $\tau_{j,k} Q_i$.}
$$z_0^2-1=0,\quad 9z_1^2-25=0, \quad 9z_2^2-16=0, \quad z_3^2-9=0.$$
Recall that these $z_i$ are $z$-depths for the normalized $p_i$, where
$p_3=(0,0,1)$.
Plugging into \eqref{e:incidence} we find that all the $z_i z_j$ are
positive, so we should choose all positive signs for the $z_i$
(i.e., $z_0=1$, $z_1=5/3$, $z_2=4/3$, $z_3=3$). Using these $z$-depths
with the normalized $p_i$ does indeed reproduce the same
square distances for the $z_i p_i$ as we have for the $P^{\orig}_i$, as desired.
%
\end{ex}

\begin{cor}\label{cor:algorithm-works} Suppose we are given general points
$$(P^{\orig}_0,p^{\orig}_0),(P^{\orig}_1,p^{\orig}_1),(P^{\orig}_2,p^{\orig}_2),(P^{\orig}_3,p^{\orig}_3)$$
where each $P^{\orig}_i \in \BR^3$ and $p^{\orig}_i$ is a canvas point at $z=1$, and suppose
further that
none of the $p^{\orig}_i$ is orthogonal to $p^{\orig}_3$, and there
exists a rigid transformation $T$ such that for each $i$, we have that
$T(P^{\orig}_i) \neq 0$ and $T(P^{\orig}_i)$ lies on the line connecting the origin to $p^{\orig}_i$.
	
Then (in the absence of
noise and rounding error) the algorithm described in \S \ref{s:algorithm}
will produce the $z_i$ such that $T(P^{\orig}_i) = z_i p^{\orig}_i$ for each $i$.
\end{cor}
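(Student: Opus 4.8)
The plan is to check that the six steps of the algorithm in \S\ref{s:algorithm} execute exactly the constructions of \Cref{prp:equations}, \Cref{thm:main-qi}, \Cref{lma:linear} and \Cref{prp:z-transform}, and to chain these together; the ``$z_i$'' in the corollary's statement are the quantities called $z^{\orig}_i$ in step (6). First I would note that, writing $L^{\orig}_i$ for the line through the origin and $p^{\orig}_i$, the numbers $a_i,b_i,c_i,d_i$ produced in step (1) are precisely the image of $(P^{\orig}_0,\dots,P^{\orig}_3,L^{\orig}_0,\dots,L^{\orig}_3)$ under \eqref{eq:pts-dists} $\times$ \eqref{eq:lines-dists}: the step-(1) formulas for $b_i,d_i$ are just the invariant expressions in terms of the $L^{\orig}_i$, and the hypothesis $p^{\orig}_i\cdot p^{\orig}_3\neq 0$ is exactly what makes these defined and places us in the situation of \Cref{prp:equations}. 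That proposition then supplies nonzero $z_0,z_1,z_2,z_3$ satisfying \eqref{e:incidence}; fixing the global sign so that $z_3>0$, these are the $z$-depths of the $T(P^{\orig}_i)$ relative to the canvas with $L^{\orig}_3$ on its optical axis, and inverting the identity in (the proof of) \Cref{prp:z-transform} gives $z_i=\bigl(p^{\orig}_i\cdot p^{\orig}_3/||p^{\orig}_3||\bigr)z^{\orig}_i$ with $z^{\orig}_i>0$ the ordinary $z$-depth. In particular each $z_i$ has the same sign as $p^{\orig}_i\cdot p^{\orig}_3$, and $z_3>0$.

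Next I would run this ``true'' tuple through steps (2)--(5). By \Cref{thm:main-qi}, evaluating the coefficients $X_{i,j}$ at the $a_{\bullet},b_{\bullet},c_{\bullet},d_{\bullet}$ of step (1), each $z_i^2$ is a root of $Q_i$; hence $(z_0^2,z_1^2,z_2^2,z_3^2)$ is one of the sixteen root-quadruples formed in step (3). Each $z_i^2>0$, so the square roots in step (4) are real, and because $z_3>0$ and $z_i$ has the sign of $p^{\orig}_i\cdot p^{\orig}_3$, applying the sign rule of step (4) to $\alpha_i=z_i^2$ reconstructs exactly the true $z_i$. Thus the true $(z_0,z_1,z_2,z_3)$ is among the sixteen candidates compared in step (5), and since it satisfies \eqref{e:incidence} identically it yields error $0$, which is the minimum possible.

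The part that needs genuine argument --- and that uses the word ``general'' --- is that this minimizer is \emph{unique}, so that step (5) is forced to return it. Here I would invoke \Cref{lma:linear}: $I$ contains a polynomial $\ell_i$ linear in $z_i^2$ whose leading coefficient is not identically zero in $a_{\bullet},b_{\bullet},c_{\bullet},d_{\bullet}$ (this non-degeneracy being precisely what \Cref{ex:example} verifies), so for general input $\ell_i$ is genuinely linear in $z_i^2$ and thus has a single root. Since $\ell_i\in I$, it vanishes wherever all of \eqref{e:incidence} hold; so if one of the sixteen candidates satisfies \eqref{e:incidence}, its value of $\alpha_i$ (which is its $z_i^2$) must equal that single root of $\ell_i$, and since the true tuple also satisfies \eqref{e:incidence} that root is the true $z_i^2$. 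As step (4) turns each $\alpha_i$ into $z_i$ deterministically, the only candidate that can satisfy \eqref{e:incidence} is the true one; the remaining fifteen therefore have strictly positive error and step (5) returns the true $(z_0,z_1,z_2,z_3)$.

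Finally, step (6) applies the map of \Cref{prp:z-transform} verbatim (with sign fixed by $z_3>0$), so its output $z^{\orig}_i=\bigl(||p^{\orig}_3||/(p^{\orig}_i\cdot p^{\orig}_3)\bigr)z_i$ is the $z$-depth of $T(P^{\orig}_i)$ with respect to the original canvas, i.e.\ $T(P^{\orig}_i)=z^{\orig}_i p^{\orig}_i$ for each $i$, which is the corollary's assertion. The main obstacle is the uniqueness argument of the preceding paragraph; carrying it out carefully also requires spelling out the measure-zero loci that ``general'' excludes --- for instance that the leading coefficients $X_{i,2}$ of the $Q_i$ and the leading coefficients of the $\ell_i$ do not vanish on the given input, and that the $Q_i$ have no accidental repeated root --- each of which is a nonvanishing condition on a polynomial already known not to vanish identically.
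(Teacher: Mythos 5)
Your proposal is correct and follows essentially the same route as the paper's own proof: \Cref{prp:equations} puts the true depths among the roots, \Cref{thm:main-qi} places their squares among the sixteen candidates, \Cref{prp:z-transform} handles the signs and the final rescaling, and \Cref{lma:linear} together with the genericity hypothesis gives the uniqueness needed for step (5). The only differences are presentational (you run the sign argument forward from $z^{\orig}_i>0$ rather than backward from the algorithm's sign convention, and you spell out the genericity loci slightly more explicitly), neither of which changes the substance.
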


\begin{proof} According to \Cref{prp:equations}, if $\bar{z}_i$ are the
$z$-depths of the $T(P^{\orig}_i)$ relative to the transformed canvas with $p^{\orig}_3$ on
its optical axis, then we have that the equations \eqref{e:incidence} vanish
when the $\bar{z}_i$ are plugged in along with the values of
$a_{\bullet}, b_{\bullet}, c_{\bullet},d_{\bullet}$ obtained from the $P^{\orig}_i$
and $p^{\orig}_i$. It follows from \Cref{thm:main-qi} that the $\bar{z}_i^2$
occur as one of the tuples of $\alpha_i$ in our algorithm, and we claim that
the $\bar{z}_i$ are then obtained as one of the 16 tuples of $z_i$ in
the next step (i.e., that the algorithm gives the correct signs for
taking the square roots). To justify this, note that the signs in the
algorithm visibly guarantee that the final $z^{\orig}_i$ are all positive. On
the other hand, \Cref{prp:z-transform} says that the transformation from
the algorithm will convert the $\bar{z}_i$ to the desired $z$-depths,
which are positive by hypothesis since the $T(P^{\orig}_i)$ must all lie in front
of the camera. This both confirms that the algorithm is indeed finding
the $\bar{z}_i$ as one of its 16 tuples, and simultaneously justifies that
the final step of the algorithm gives the desired $z$-depths. It thus
suffices to verify that the $\bar{z}_i$ will be the tuple chosen in the
``error minimization'' step. Since we have assumed no noise or rounding
error, the tuple $\bar{z}_i$ will satisfy the equations exactly, and we
just need to know that none of the other tuples also satisfy the equations.
This is where the hypothesis that the $P^{\orig}_i$ and $p^{\orig}_i$ are general
comes in, since we can finally use \Cref{lma:linear} to conclude that the
$\bar{z}_i^2$ are uniquely determined by the vanishing of 
\eqref{e:incidence}, so the $\bar{z}_i$ will be the unique tuple which
minimizes the error.
\end{proof}


We conclude this section by presenting some polynomial constraints on
quadruples of matched 3D points and canvas points which must be satisfied
in order for the associated $4$-point perspective problem to be solvable.
As mentioned early, this is potentially of substantial practical interest,
since it offers another approach to rapid culling of quadruples containing
mismatched points. At present, we have found it more practical to simply
apply our main algorithm in order to simultaneously recognize mismatches
and solve for $z$-depths, but due to the combination of mathematical and
potentially algorithmic interest, we include a fairly complete discussion.
The derivation of these constraints is straightforward given what we have
already accomplished:

\begin{dfn} For each $i$, let $g_{i,i}$ be the rational function in
$a_{\bullet},b_{\bullet},c_{\bullet},d_{\bullet}$ obtained by setting
$\ell_i=0$ and solving for $z_i^2$. For each pair of distinct $i,j$,
let $g_{i,j}$ be the rational function in the same variables obtained from the
appropriate equation in \eqref{e:incidence} as the expression for $z_i z_j$
after isolating $z_i z_j$ on one side and plugging in $g_{k,k}$ for each of
the $z_k^2$.
\end{dfn}

\begin{prp}\label{prp:var-eqs} The desired polynomial constraints on matched
tuples are obtained by setting
$$g_{i,i} g_{j,j}=(g_{i,j})^2$$
for each $i<j$, and
$$g_{i,j} g_{j,k}=g_{i,k} g_{j,j}$$
for all distinct $i,j,k$, and clearing denominators as necessary.

Moreover, our constraints are generically complete in the following precise
sense: if our matched tuple has the property that when we evaluate the
$a_{\bullet}, b_{\bullet}, c_{\bullet}, d_{\bullet}$ for this tuple,
none of the $d_i$ are zero, and none of the denominators of the $g_{i,i}$ 
are zero, and if the above polynomial constraints are all satisfied by our
matched tuple, then the tuple yields a solvable $4$-point perspective problem.
\end{prp}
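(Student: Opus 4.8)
The plan is to prove the two assertions in turn: that the displayed expressions really are constraints (they vanish on every solvable matched tuple), and then the stated partial converse.

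For the first assertion, I would argue that, after clearing denominators, each of the proposed relations $g_{i,i}g_{j,j}-(g_{i,j})^2$ and $g_{i,j}g_{j,k}-g_{i,k}g_{j,j}$ defines a polynomial lying in the elimination ideal $I\cap\BR[a_\bullet,b_\bullet,c_\bullet,d_\bullet]$, where $I\subseteq\BR[a_\bullet,b_\bullet,c_\bullet,d_\bullet,z_\bullet]$ is generated by \eqref{e:incidence}. The key bookkeeping is: writing $g_{i,i}=M_{i,i}/N_{i,i}$ and $g_{i,j}=M_{i,j}/N_{i,j}$, one has $N_{i,i}z_i^2-M_{i,i}\in I$ (this is $\pm\ell_i$ from \Cref{lma:linear}) and, directly from the equations of \eqref{e:incidence} used to define the $g_{i,j}$, also $N_{i,j}z_iz_j-M_{i,j}\in I$. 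Substituting these congruences into the cleared numerators reduces everything modulo $I$ to the identically-zero expressions $z_i^2z_j^2-(z_iz_j)^2$ and $(z_iz_j)(z_jz_k)-(z_iz_k)z_j^2$; hence the numerators lie in $I$, and being free of the $z_\bullet$, in the elimination ideal. Finally \Cref{prp:equations} shows that any solvable matched tuple is the image under \eqref{eq:pts-dists} $\times$ \eqref{eq:lines-dists} of a point which extends, through the $z_i$, to a zero of \eqref{e:incidence}, so every element of the elimination ideal -- in particular each constraint polynomial -- vanishes on it.

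For the converse, suppose the hypotheses hold. Since $d_i\neq 0$ and every $N_{i,i}\neq 0$, all the $g_{i,i}$ and $g_{i,j}$ may be evaluated at the tuple; call the values $\alpha_i$ and $m_{i,j}$, and let $M$ be the symmetric $4\times 4$ matrix with $\alpha_i$ on the diagonal and $m_{i,j}$ off it. By the very way the $g_{i,j}$ are built, substituting $z_i^2\mapsto\alpha_i$, $z_iz_j\mapsto m_{i,j}$ satisfies all six equations of \eqref{e:incidence}. The polynomial constraints say exactly that the two displayed families of $2\times 2$ minors of $M$ vanish, and I would check that together with $d_i\neq 0$, $N_{i,i}\neq 0$ this forces the remaining, index-disjoint $2\times 2$ minors to vanish too, so $\operatorname{rank}M\le 1$. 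After excluding $M=0$ (which would force all $a_i=c_i=0$, a degenerate tetrahedron not present generically), we may write $M=\varepsilon vv^{\top}$ with $v\neq 0$ and $\varepsilon=\pm 1$. The one genuinely geometric step is that $\varepsilon=+1$: using $b_i\ge 1$ (since $b_i=p_i\cdot p_i$ with $p_i$ on the plane $z=1$) and $c_i\ge 0$, the equation $c_i=\alpha_3+b_i\alpha_i-2m_{i,3}$ with $\varepsilon=-1$ reads $c_i=-(v_3^2+b_iv_i^2-2v_iv_3)\le-(v_3-v_i)^2\le0$, forcing $c_i=0$ and then $v_i=v_3$ and $b_i=1$ for all $i$, i.e. a fully degenerate canvas, excluded generically. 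Hence $M=vv^{\top}$, and setting $z_i:=v_i$ (nonzero, again by genericity) produces nonzero $z_i$ satisfying \eqref{e:incidence}.

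To conclude, the identities established in the proof of \Cref{prp:equations} show that, with these $z_i$ and the canvas points $p_i$ (in the system where $L_3$ is the optical axis), equations \eqref{e:incidence} say precisely that the points $z_ip_i$ have the same six pairwise squared distances as the $P^{\orig}_i$; by \Cref{prp:tetrahedron} there is then a rigid transformation $T$ with $T(P^{\orig}_i)=z_ip_i\neq 0$ for each $i$ (replacing all $z_i$ by $-z_i$ reverses the orientation of the reconstruction, so one of the two sign choices yields an orientation-preserving pose), which is exactly what it means for the $4$-point perspective problem to be solvable. The main obstacle is not the ideal-membership bookkeeping of the first part; it is the two non-formal steps in the converse -- verifying that the two stated minor families plus the nonvanishing hypotheses really imply $\operatorname{rank}M\le 1$, and pinning down the sign $\varepsilon=+1$ by feeding the geometric inequalities $b_i\ge 1$, $c_i\ge 0$ back through the incidence equations.
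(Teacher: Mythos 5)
Your proof is correct and follows the same two-step strategy as the paper's: the forward direction rests on the identities $z_i^2 z_j^2=(z_iz_j)^2$ and $(z_iz_j)(z_jz_k)=(z_iz_k)z_j^2$ together with the observation that the cleared numerators lie in the elimination ideal (the paper phrases this as ``evaluate $g$ at the actual $z$-depths,'' but the content is the same), and the converse reconstructs the $z_i$ from the evaluated $g_{i,j}$ using the two minor families before invoking \Cref{prp:equations}(2) and \Cref{prp:tetrahedron}. The one genuine divergence is in how the converse is packaged. The paper introduces auxiliary quantities $z_{i,j}=g_{i,j}$, takes $z_i$ to be a square root of $z_{i,i}$, and propagates signs via $z_i=z_{i,j}/z_j$ together with \eqref{e:2nd-minors}; you instead factor the symmetric matrix $M=(g_{i,j})$ as $\varepsilon vv^{\top}$. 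Your route costs you the extra verification that the stated minors force $\operatorname{rank}M\le 1$ (which does hold: if some $g_{j,j}\neq 0$, then \eqref{e:2nd-minors} gives $g_{i,k}=g_{i,j}g_{j,k}/g_{j,j}$, killing the index-disjoint minors as well), but it buys something the paper glosses over: ``choose a square root of $z_{i,i}$'' tacitly assumes $g_{i,i}\ge 0$, and your $\varepsilon=+1$ argument via $b_i\ge 1$ (Cauchy--Schwarz) and $c_i\ge 0$ is precisely the missing justification. Both arguments dispose of the all-zero case and of orientation in the same slightly informal, ``generic'' way, so on those points you are no worse off than the original.
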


It is also worth noting that we see from \Cref{ex:example} that a
generic matched tuple (even one coming from a solvable perspective problem)
will satisfy the nonvanishing conditions on the $d_i$ and denominators of
the $g_{i,i}$, so we really can say that our constraints are complete for
general matched tuples arising from perspective problems.

\begin{proof}
If we start with a matched tuple that corresponds to a solvable perspective
problem, and let $z_i$ be the transformed $z$-depths as in
\Cref{prp:equations}, we conclude from \Cref{prp:equations} and \Cref{lma:linear}
that we must have $z_i^2=g_{i,i}$ for each $i$, and the conditions
$z_i z_j=g_{i,j}$ for $i,j$ distinct are likewise obtained from
the equations \eqref{e:incidence}.
We then obtain the desired equations from the identities
$z_i^2 z_j^2 = (z_i z_j)^2$ and
$(z_i z_j)(z_j z_k) = (z_i z_k) z_j^2$ respectively.

In order to verify the assertion that our equations are generically complete,
it is conceptually helpful to introduce variables $z_{i,j}$ for all $i,j$
intended to represent putative values of $z_i z_j$.
From this point of view, we see that under our assumption that the 
$d_{\bullet}$ and denominators of the $g_{i,i}$ are all nonzero,
imposing our original equations on the original set of variables gives the
same results as introducing the $z_{i,j}$, and then imposing that
$z_{i,j}=g_{i,j}$ for all $i,j$, together with
\begin{equation} \label{e:1st-minors}
z_{i,i} z_{j,j} = \left(z_{i,j}\right)^2
\end{equation}
for all $i,j$ distinct, and
\begin{equation} \label{e:2nd-minors}
z_{i,j} z_{j,k} = z_{i,k} z_{j,j}
\end{equation}
for all $i,j,k$ distinct. Note that setting $z_{i,j}=g_{i,j}$ is
possible precisely because of our nonvanishing assumption.

Now, suppose we have a tuple of $(P^{\orig}_i, p^{\orig}_i)$ such that the corresponding
values of $a_{\bullet},b_{\bullet},c_{\bullet},d_{\bullet}$ satisfy the
equations from the statement of the proposition. Then setting
$z_{i,j}=g_{i,j}$ for all $i,j$, we claim we can produce values for
$z_i$ such that $z_{i,j}=z_i z_j$ for all $i \leq j$.
Indeed, if we choose $z_i$ to be any square root of $z_{i,i}$ for each $i$,
then equation \eqref{e:1st-minors}
guarantees that we have $z_{i,j} = z_i z_j$ up to signs. If all the $z_i$
are zero, there is nothing more to check, but if some $z_j \neq 0$, we can
fix a sign choice for $z_j$, and then determine the signs of $z_i$ for
$i \neq j$ by setting $z_i = z_{i,j}/z_j$ (again, this still satisfies
$z_i^2 =z_{i,i}$ due to \eqref{e:1st-minors}). Then we conclude that
$z_{i,k} = z_i z_k$ for all $i,k$ not equal to $j$ by applying
Equation \eqref{e:2nd-minors}, proving the claim.

Next, by construction the equations \eqref{e:incidence}
are automatically satisfied, so it follows from the last part of
\Cref{prp:equations} that after renormalizing to account for the different
image canvas we can produce depths $z^{\orig}_i$ such that the distances
between the $P^{\orig}_i$ agree with the distances between the $z^{\orig}_i p^{\orig}_i$.
It follows from \Cref{prp:tetrahedron} that the perspective problem is
solvable.
\end{proof}

We next make some remarks on the algebraic geometry context of the equations
we have been studying.

\begin{rmr} 
Denote by $W\subset\BR^6\times\BR^6\times\BR^4$ 
the zero locus of the equations in \eqref{e:incidence}, and let
$V$ to be the image of $W$ when we project to 
$\BR^6\times\BR^6$ (i.e., forgetting the $z_{\bullet}$).

Note that $V$ is larger than the set of points coming from perspective
problems, since we have ignored the Cayley-Menger
determinant condition discussed in \Cref{rmr:cayley-menger}. However,
this is ultimately irrelevant to our work.

Observe that geometrically, the
equations defining $W$ can be viewed as realizing $W$ as the graph of
a map $\BR^{10} \to \BR^6$, since they are giving formulas for the
$a_{\bullet}, c_{\bullet}$ in terms of the other variables. Thus,
$W$ is isomorphic to $\BR^{10}$, and in particular is irreducible of
dimension $10$. Since $V$ is defined as the image of $W$, it follows
that $V$ is also irreducible, of dimension at most $10$.

Moreover, \Cref{ex:example} shows that at least some fibers of the map 
$W \to V$ are finite; from Chevalley's theorem it follows that the fibers
are finite on a dense open subset of $W$, so $\dim V = \dim W = 10$.
\end{rmr}

\begin{rmr}
In theory, any given one of the equations we describe in \Cref{prp:var-eqs}
above could turn out to be trivial, if we happen to produce the 
same formulas on both sides. However, since we know that we have produced
enough equations to determine $V$, and since $V$ has codimension $2$, we know
without even computing the equations that
we are at least producing some nontrivial constraints on the variables.

Along similar lines, even if all the $Q_i$ and the linear polynomials 
$g_{i,i}$ are 
known not to be identically zero, we would also theoretically have to worry
that they could vanish identically on the points of interest, coming from
perspective problems. This would happen if for instance they were multiples
of the Cayley-Menger determinant for the canvas points.

Of course, we have seen
from real-world experience that this is not the case, but it is also ruled
out definitively by \Cref{ex:example}, where we start with actual $4$-tuples
of points, with the second tuples being coplanar, and verify that we get
nontrivial linear equations for the $z_i^2$.
\end{rmr}

We conclude with a contextual remark on motivation and possible alternate
mathematical approaches.

\begin{rmr}\label{rmr:algebraic_comments}
Our coordinates choice is motivated by invariant theory: This is more evident on the 3D side: the
$a_i, c_i$ are the invarants under (the 6-dimensional group $E(3)$ of) rigid
transformations of the 12-dimensional space representing four points in $\BR^3$. On the 2D side the
$b_i, d_i$ are the invariants under the 1D group of rotations about the
optical axis of the 6 dimensional space
of planar configurations of 3 points.

We originally tried to act with the full group of rotations on the 2D side. This would give
much more symmetric equations in place of \eqref{e:incidence}, but on the other hand, it means moving
to a spherical canvas representation. This proved computationally infeasible, probably
due to the non-algebraicity explained in \Cref{r:nonalgebraic}.

Another direction is to try to ``slice'' the moduli space instead of taking invariants; we already
partially did this for the 2D points by taking a representative for the configuration for which
one point is on the optical axis. However, one can do much more: e.g. on the 3D side force one of
the points to be at the origin, another to be on the optical axis, and another on some plane containing
this axis. This did not prove computationally fruitful (specifically, one gets rather ugly equations).
\end{rmr}

%
\section{Utilization for the perspective $n$-points problem}\label{s:perspective}
%

We begin by summarizing the entire process of solving a perspective
$n$-points problem using our algorithm.

Suppose we are given $n$ prospective matched pairs
between distinct $3D$ points and distinct canvas points
(imbedded at $z=1$), with $n \geq 4$. Then:

\begin{enumerate}
\item Choose a suitable $N$ based on $n$ and the amount of computational
time available, and choose $N$ random subsets
$S_1,\dots,S_N \subseteq \{1,\dots,n\}$, each of size $4$.

\item For each $m \leq N$, set up the perspective $4$-points problem
obtained by letting $P^{\orig}_0,\dots,P^{\orig}_3$ be the set of 3D points
coming from $i \in S_m$,
and $p^{\orig}_0,\dots,p^{\orig}_3$ the corresponding canvas points.
Then using our $4$-point algorithm as described in \S \ref{s:algorithm},
compute expected $z$-depths $z^{\orig}_0,\dots,z^{\orig}_3$ to make
the point quadruple $z^{\orig}_j p^{\orig}_j$ into a rigid transformation of
the $P^{\orig}_j$.

%
%
%
%
%
%
\item Iteratively unite subsets $S_m$ of points, which have estimated error below a certain
  threshold,
and which overlap with a previously united subset on three of the four indices,
and which
have strong agreement of the renormalized $z^{\orig}_i$ for these three indices.

\item Apply Horn's algorithm \cite{Ho} to solve for the absolute orientation
for each of the most promising united subsets of point matches, as measured
by size and cumulative error.

\item For each pose obtained from Horn's algorithm, apply Fletcher's variant
of the Levenberg-Marquardt algorithm \cite{F} to use gradient descent to
further minimize reprojection error.

\item Return to the full original set of matched points, and choose the
pose candidate which has the best combination of agreement with large
numbers of points matches, and small reprojection error.
\end{enumerate}

We conclude with some contextual remarks on practical computational
considerations.

\begin{rmr}\label{rmr:coef-compute}
The topic of actually evaluating complex multivariate polynomials such as
the coefficients of the $Q_i$ is a matter of current research. All approaches
are by and large inspired by Horner's classical
algorithm, where one typically builds a directed acyclic evaluation graph
whose non-terminal nodes are of the form
{\small \begin{verbatim}
Current_node_value = some_variable * child0_value + child1_value,
\end{verbatim}}
\noindent
and where the terminal nodes are either variables or numbers.
This form of evaluation plays nicely with the standard hardware machine operation
``fused multiply and add''.
The evaluation graphs we built for the coefficients
$X_{0,0},X_{0,1},X_{0,2},X_{3,0},X_{3,1},X_{3,2}$ have
$113, 150, 122, 120, 132, 83$ nonterminal nodes respectively. No doubt less naive approaches
than what we tried would yield better results.
\end{rmr}

\begin{rmr}\label{rmr:why_quad}
We prefer to find the $z_i^2$ by solving the $Q_is$ rather than using the
$\ell_i$, since the latter would either mean evaluating the much more
complex symbolic expressions for the coefficients of $\ell_i$, or
doubling the number of $X_{i,j}$ we have to evaluate in order to compute
two quadratic polynomials and cancel the leading terms. In contrast, solving
the quadratics amounts just to taking a square root and keeping track of the
combinatorics of the $2^4$ solutions.

One may argue that following the same reasoning we could have dealt with
a quartic in $z_0^2$. In fact, it is rather easy to find this quartic just
by eliminating $z_1, z_2$ from the 3 equations in \eqref{e:incidence}
not involving $z_3$: this is the quartic
describing the $2\times 4$ solutions to the perspective 3 points problem.
The reason we prefer the quadratic is threefold: solving a quartic equation
involves many roots, which is much more complicated to keep track of than
a single sign; the algebraic computation is far less stable; and finally, by using an overdetermined
system we get an accuracy measure.
\end{rmr}

\begin{rmr}\label{rmr:solution}
With real life data, due to the presence of measurement, as well as numerical
inaccuracies, our set of equations cannot be exactly solved. As always, we
seek the solution which minimizes some error model. The standard error model
for the perspective $n$ points model is the {\em reprojection error}: the
sum of the squares of the distances between the points on the canvas, and the
projection of the 3D points on the canvas. Minimizing the reprojection error
cannot be done exactly, but rather is solved with some optimization algorithm.

One can view the sum
of the difference of the evaluations of \verb|f1|...\verb|f6| after substituting
in the
estimations of $z_i$ as the ``error'' in our solution. Although this is not the
reprojection error or the reconstruction error from SQPnP, it is something
which lies, in a sense, in the same domain: namely it is the
difference between the squared distances between the points in 3D, and the
squared distances between the selected points along the rays.
While our error is not identical to the reprojection error, it is usually close
enough so that for real data we get a ``seed solution'' which is closer to 
the optimized solution than we would obtain from EPnP type solutions. For
a precise comparison, see \S \ref{s:experiments} below.

To get a more balanced error evaluation, it is better to solve all the $z_i^2$
from their respective $Q_i$, although if solution stability were not an issue,
given even just one, we can derive linear equations
for the rest in terms of rational functions in the one we solved for.

Finally and crucially, note that if the error for a configuration is large,
the configuration
may be discarded without solving the absolute orientation problem at all.
\end{rmr}

\begin{rmr}
Observe that all the operations involved in computing $z_i^2$ -- including taking a
square root -- are vectorizable on any modern architecture. This fact give
a large performance boost -- more than an order of magnitude --
compared to gradient descent algorithms such as EPnP. 
\end{rmr}

%
\section{Experiments and Evaluation}\label{s:experiments}
%
\subsection{Experimental Setup}\ \\
We evaluated our algorithm -- with three different thresholds for the estimated error -- against EPnP and SQPnP across the following parameters:
\begin{description}
\item [Computational efficiency] Average execution time per configuration (the variance
is extremely small for all algorithms involved).
\item [Rotation and translation accuracy] Angular and Eculidean distances between the computed solution and ground truth rotations and translations.
\item [Number of successes = true positive rate] Percentage of trials the algorithm returned a 
  solution when one is expected.
\item [Early true negative rate] Percentage of trials the algorithm failed fast when there isn't
  supposed to be a solution.
\end{description}
We conducted two experiments. In the main one we tested the first three paramaeters.
To this end, we conducted multiple measurements, where in each measurement we generated
a synthetic scenario with the following characteristics:
\begin{description}
\item [3D Points] We performed three different sub-experiments: ``general setting'', using
  four random points on the unit sphere; ``planar setting'', using four points on the
  unit circle, and ``three collinear points'' where two points are at $(\pm1, 0, 0)$, one is normally
  sampled on the line connecting the first two, and one is randomly sampled on the unit sphere.
\item [Random rotation] Chosen randomly on SO(3), then applied to our 3D points.
\item [Random translation] A random point on the unit sphere, used to translate the four 3D points.
\item [Noise model] Add ``noise factor'' times a random point on the unit sphere to each of the four points.
\item [Projected points] The original, unaltered 3D points were translated 2.5 units in the direction of the optical axis to ensure they were in front of the camera, and then projected to the image plane.
\end{description}
For the second experiment, which checks for ``fast rejection'' capabilities, we used
the same data generation scheme above, except that one of the 3D points is subsequently replaced with
a different random point chosen from the same distribution.

Our algorithm was implemented using a batched SIMD approach for efficient computation of
compatible configurations (compiled with -O3 for our tests; our algorithm performs 45\% faster when
compiled to AVX2, but then there are linkage issues with openCV).
Recall from the introduction that our algorithm outputs depth values for the four planar points,
which are then combined with Horn's method to recover the full 6-DOF pose.
For comparison, we used the faster of OpenCV's 4.6.0 and 4.13.0-dev implementations of EPnP and
SQPnP.
\subsection{Results and Analysis}
\subsubsection{Computational efficiency:}
Measured in $\mu$-seconds per configuration on a 5 GHz 13th Gen Intel(R) Core(TM) i7-1360P
\begin{description}
\item [EPnP] taking $n=4$ in OpenCV 4.6.0 implementation: 25.771
\item [SQPnP] taking $n=4$ in OpenCV 4.13.0-dev implementation: 36.312
\item [Our algorithm] 0.477
\item [Our algorithm] when compiled with -mAVX2 flag: 0.258
\item [Horn] (naive implementation using OpenCV 4.6.0) :2.533
\end{description}
\subsubsection{Rotations and translation accuracy, and successes}
For each noise level and configuration type, we conducted 10,000 measurements, summed up in the table below:
{\fontsize{4}{8} \selectfont
  \begin{center}
    \begin{tabular}{r|rrrrr|rrrrr|rrrrr}
      \multicolumn{1}{c|}{noise}&\multicolumn{5}{c|}{rotation error in degrees; avg(std)}&\multicolumn{5}{c|}{translation error in milli-units; avg(std)}&\multicolumn{5}{c}{success count}\\
      \multicolumn{1}{c|}{milli}&EPnP&SQPnP&\multicolumn{3}{c|}{ours for the following thresholds}&EPnP&SQPnP&\multicolumn{3}{c|}{our for the following thresholds}&EPnP&SQPnP&\multicolumn{3}{c}{ours}\\
      units&n=4&n=4&0.05&0.1&1&n=4&n=4&0.05&0.1&1&n=4&n=4&0.05&0.1&1\\
      \hline
      \multicolumn{16}{c}{general configuration}\\
\hline
 0&      11.4(24.3)&  1.8(14.6)&  0.5(2.8)&   0.9(4.3)&    6.7(24.4)&       277(822)&  24(218)&   8(47)&   15(71)&   104(374)&     10000&   9999&    7884&    8200&    9387\\
 1&      11.2(24.1)&  1.8(14.2)&  1.0(4.1)&   1.7(6.8)&    8.4(26.8)&       257(712)&  23(193)&  17(66)&   28(103)&  122(382)&     10000&   9998&    7421&    7955&    9372\\
 2&      11.7(24.8)&  1.8(14.0)&  1.4(5.5)&   2.1(7.3)&    8.6(25.8)&       266(525)&  25(207)&  25(98)&   37(129)&  131(377)&     10000&   9998&    7001&    7762&    9342\\
 3&      11.4(23.7)&  2.3(15.9)&  1.7(5.4)&   2.5(8.0)&    9.8(27.4)&       259(518)&  31(216)&  29(87)&   42(126)&  148(404)&     10000&   9997&    6566&    7453&    9275\\
 \hline
 4&      11.6(24.5)&  2.2(15.2)&  1.8(5.1)&   2.7(7.6)&   10.0(26.8)&       269(667)&  32(214)&  33(90)&   48(124)&  158(410)&     10000&   9999&    6267&    7327&    9295\\
 5&      11.7(24.1)&  2.4(15.4)&  2.1(5.5)&   3.0(7.8)&   10.4(27.5)&       281(1072)& 35(212)&  36(90)&   51(124)&  165(420)&     10000&  10000&    5975&    7137&    9237\\
 6&      11.5(23.7)&  2.2(14.2)&  2.3(6.1)&   3.5(9.2)&   11.6(29.3)&       265(587)&  35(210)&  40(106)&  59(149)&  181(439)&     10000&   9999&    5639&    7016&    9249\\
 8&      12.1(24.9)&  2.5(15.1)&  2.8(6.9)&   4.2(10.3)&  12.5(29.2)&       274(573)&  40(214)&  49(121)&  72(170)&  197(442)&     10000&  10000&    5116&    6673&    9204\\
 \hline
10&      12.4(24.6)&  3.2(17.0)&  3.0(7.0)&   4.4(10.2)&  12.5(28.3)&       275(557)&  51(245)&  54(118)&  78(176)&  198(428)&     10000&   9997&    4719&    6413&    9125\\
12&      12.6(24.7)&  3.3(16.5)&  3.4(8.1)&   5.0(10.9)&  13.8(29.7)&       286(540)&  53(231)&  59(117)&  85(173)&  222(456)&     10000&  10000&    4352&    6166&    9185\\
15&      13.1(25.5)&  3.4(16.1)&  3.5(6.7)&   5.2(10.5)&  14.8(30.6)&       289(529)&  57(222)&  63(115)&  89(172)&  235(460)&     10000&   9997&    3838&    5732&    9135\\
20&      13.9(26.6)&  3.9(15.9)&  4.5(8.2)&   6.3(12.4)&  16.8(32.3)&       309(545)&  69(231)&  80(138)& 110(204)&  272(500)&     10000&   9999&    3234&    5231&    9041\\
\hline
25&      14.1(25.9)&  4.4(16.2)&  5.4(10.8)&  7.5(14.9)&  18.2(32.7)&       309(675)&  79(226)&  94(148)& 128(218)&  301(514)&     10000&   9998&    2752&    4714&    8914\\
30&      14.4(26.0)&  5.0(16.8)&  6.0(10.7)&  8.1(14.4)&  19.7(33.8)&       319(644)&  90(243)& 110(183)& 144(241)&  316(506)&     10000&   9998&    2387&    4328&    8896\\
\hline
\multicolumn{16}{c}{planar configuration}\\
\hline
0&  24.0(19.1)&   7.2(16.8)&   8.0(12.2)&  12.2(15.5)&  16.3(20.3)&  401(400)&    132(356)&    121(205)&    187(266)&    256(363)&     10000&   10000&    7154&    8939&    9916\\
5&      26.3(18.5)&  10.9(19.2)&  12.0(13.6)&  16.1(16.4)&  19.8(21.0)&       431(396)&    192(396)&    176(224)&    238(276)&    301(372)&     10000&   10000&    6598&    8831&    9716\\
10&      27.0(18.3)&  13.1(19.6)&  13.3(13.8)&  16.8(15.9)&  20.8(20.7)&       451(403)&    236(417)&    199(233)&    254(276)&    318(371)&     10000&   10000&    6389&    8607&    9678\\
20&      28.0(18.2)&  17.5(20.5)&  15.2(13.6)&  18.2(15.2)&  23.0(21.1)&       464(401)&    309(446)&    225(232)&    271(265)&    353(390)&     10000&   10000&    5816&    8201&    9579\\
\hline
\multicolumn{16}{c}{3 collinear points + one extra point}\\
\hline
0&      43.5(52.5)&   9.3(32.1)&   2.2(7.5)&   3.0(9.0)&  13.1(29.0)&       552(649)&    107(373)&     32(108)&     44(130)&    211(489)&     10000&   10000&    7317&    7721&    9331\\
5&      49.3(54.3)&  12.3(34.6)&   5.8(12.3)&   7.0(14.2)&  18.4(32.4)&       613(651)&    148(408)&     81(163)&    100(190)&    285(517)&     10000&   10000&    6151&    7062&    9247\\
10&      50.5(54.8)&  15.7(37.9)&   7.2(13.3)&   9.6(17.4)&  21.8(34.2)&       637(769)&    187(442)&    100(172)&    133(221)&    331(530)&     10000&   10000&    5315&    6522&    9119\\
20&      50.1(53.5)&  20.8(41.3)&   9.7(15.1)&  12.1(18.5)&  25.7(36.0)&       628(644)&    252(476)&    136(192)&    170(235)&    385(545)&     10000&   10000&    4481&    5906&    8958\\
    \end{tabular}
    \end{center}
}

We note several conclusions deduced from the data in the table:
\begin{itemize}
\item The high standard deviation of the error when compared to the average error -- in all the
  algorithms -- is due to the fact that they are all somewhat unstable. Note however,
  that the quotient of the
  standard deviation of the error by the average error is lower in our algorithm than in SQPnP.
\item Looking at general configurations, with threshold .05 our algorithm is -- on average -- as accurate as SQPnP,
  which is the state of the art for accuracy. With threshold 1 it is as accurate as EPnP.
\item Our algorithm suffers much less than EPnP and SQPnP from degenerate configurations, which are
  ubiquitous in real life data.
\item Even with a huge error rate of 30 milli-units (which is 3\% noise) we still have
25\% of extremely accurate solutions (these are the cases where the errors all ``align perfectly'' so
to speak to represent a ``good'' configuration).
\end{itemize}
Regarding the realism of the different noise levels: while 3\% noise rate is theoretically
interesting, even 1\%=10 milli-units noise translates to several centimeters in a typical room, or 5
pixels in a standard modern non-HD 640$\times$480 pixels video frame. In practice if ones errors are higher than this, one has bigger
problems than the speed and accuracy of the chosen algorithm.
\subsubsection{Early true negative}
In our ``fast rejection'' experiment, EPnP solved the configurations with average rotational error 75.2(48.1) and average translation error 1292(974) with 10000 successes,
SQPnP solved the configurations with average rotational error 72.8(48.7) and average translation error 1252(912) with 9996 successes, whereas our algorithm rejected (i.e. ``failed'') 99\%
of the configurations with threshold .05, and 96\% of them with threshold 0.1. Thus our success criterion present a huge advantage in cases where the underlying 2D/3D data is accurate, but
point matching is poor.

%
\section{Conclusion}\label{s:conclusion}
%

By drastically speeding up the process of solving for pose
and providing a clear and effective criterion for seed rejection, our
algorithm has the potential to be transformative in real-world
perspective problems. In any situation our algorithm compares favorably to
the current
state of the art, running two orders of magnitude faster than EPnP or SQPnP 
to reduce a problem to solving for absolute orientation, and one order of
magnitude faster even when including the time for Horn's algorithm to fully
solve for pose. Its accuracy is roughly comparable as well: depending on
where one sets the desired error threshold, it can range from similar to
EPnP to similar to SQPnP, at the cost of rejecting a higher percentages
of quadruples for stricter thresholds. Moreover, our algorithm shows a
high degree of stability, and is resilient to degenerate configurations
such as coplanar points, or triples of collinear points, both of which can
frequently occur with real-world data.

Beyond its general versatility, our algorithm especially shines in 
situations where accurate point matching is challenging, and one needs to
sift through potentially thousands of matched point quadruples in order to
find ones of high enough quality to use for accurate pose reconstruction.
Such situations are quite common, and our algorithm's ability to reject
badly matched quadruples fully two orders of magnitude faster than EPnP or 
SQPnP places it in a class by itself, and allows for processing far more
quadruples than was previously possible when run time is a limiting factor.


\appendix
\section{Singular calculations}\label{app:singular}

In order to obtain the quadratic polynomials $Q_i$ referenced in 
\Cref{thm:main-qi}, we begin by simply coding the equations
in \eqref{e:incidence} into \verb|Singular|, and letting $I$ be the
ideal generated by all six polynomials.
{\small
\begin{verbatim}
ring r=0,(c0,c1,c2,a0,a1,a2,b0,b1,b2,d0,d1,d2,z0,z1,z2,z3),(dp(12),dp(4));
poly f1 = b1*z1^2+b2*z2^2-2*d0*z1*z2-a0;
poly f2 = b2*z2^2+b0*z0^2-2*d1*z2*z0-a1;
poly f3 = b0*z0^2+b1*z1^2-2*d2*z0*z1-a2;
poly f4 = z3^2+b0*z0^2-2*z0*z3-c0;
poly f5 = z3^2+b1*z1^2-2*z1*z3-c1;
poly f6 = z3^2+b2*z2^2-2*z2*z3-c2;
ideal I=f1,f2,f3,f4,f5,f6;
\end{verbatim}
}
The chosen polynomial ordering is natural due to our desire to eliminate
the last four variables from the ideal.

To compute $Q_0$, the first two steps are straightforward: eliminate $z_3$
and then $z_2$ from the ideal $I$.
{\small
\begin{verbatim}
ideal I0_3=eliminate(I, z3);  // very fast
ideal I0_2=eliminate(I0_3, z2);  // seconds
size(I0_2);  // returns 13
\end{verbatim}
}
However, trying to directly eliminate $z_1$ from the above ideal hit a
computational brick wall;
instead, after some experimentation, we defined a certain subideal, and
were able to make the final elimination starting from it:
{\small
\begin{verbatim}
ideal J0_2=I0_2[1],I0_2[3],I0_2[5];  // Taking a sub-ideal
ideal J0_1=eliminate(J0_2, z1);  // this one takes a few minutes
size(J0_1);  // returns 13, J0_1[1] is degree 2 in z0^2
\end{verbatim}
}
In defining \verb|J0_2|, our ultimate choices are in fact
rather natural: the three polynomials in the definition of
\verb|J0_2| are simply the shortest three generators of \verb|I0_2|, and the
reason for using three of them is that this is
the minimal number which
would be expected to support eliminating $z_1$ while still having a
nontrivial equation in $z_0$.

In any case, the final line of the above \verb|Singular| code shows that
we have produced an element of $I$ of the desired form, so we thus define
$Q_0$:

\begin{dfn} Let $Q_0(x)$ be the polynomial obtained by replacing \verb|z0|$^2$
with $x$ in the above \verb|J0_1[1]|, considered as a quadratic
polynomial in $x$ with coefficients being polynomials in the
$a_\bullet, b_\bullet, c_\bullet, d_\bullet$.
\end{dfn}

To obtain $Q_3$ we run the following code in the
same spirit as our derivation of $Q_0$:
{\small
\begin{verbatim}
ideal I3_2=eliminate(I, z2);
ideal I3_1=eliminate(I3_2, z1);
ideal J3_1=I3_1[1], I3_1[3], I3_1[4];  // Taking a sub-ideal again
ideal J3_0=eliminate(J3_1, z0);
\end{verbatim}
}
We then get that 
\verb|J3_0[1]| is of degree $2$ in \verb|z3^2|, so we can define $Q_3$:
\begin{dfn} Let $Q_3(x)$ be the polynomial obtained by replacing \verb|z3|$^2$
with $x$ in the above \verb|J3_0[1]|, considered as a quadratic
polynomial in $x$ with coefficients being polynomials in the
$a_\bullet, b_\bullet, c_\bullet, d_\bullet$.
\end{dfn}

%
%
%
%

\section{The coefficients of the $Q_i$}\label{app:coef-formulas}

Below are the formulas for the coefficients of $Q_0$ and $Q_3$.
The output of our \verb|Singular| code was post processed to remove the
\verb|*| symbol:
{\tiny
\begin{verbatim}
X00 = c0^2a0b1d0d1-2c0c1a0b1d0d1+c1^2a0b1d0d1+c0^2a1b1d0d1-2c0c1a1b1d0d1+c1^2a1b1d0d1-c0^2a2b1d0d1+2c0c1a2b1d0d1-c1^2a2b1d0d1
+2c0a0a2b1d0d1-2c1a0a2b1d0d1+2c0a1a2b1d0d1-2c1a1a2b1d0d1-2c0a2^2b1d0d1+2c1a2^2b1d0d1+a0a2^2b1d0d1+a1a2^2b1d0d1-a2^3b1d0d1
-c0^2a0b1b2d2+2c0c1a0b1b2d2-c1^2a0b1b2d2-c0a0^2b1b2d2+c1a0^2b1b2d2+c0^2a1b1b2d2-2c0c1a1b1b2d2+c1^2a1b1b2d2+2c0a0a1b1b2d2
-2c1a0a1b1b2d2-c0a1^2b1b2d2+c1a1^2b1b2d2+c0^2a2b1b2d2-2c0c1a2b1b2d2+c1^2a2b1b2d2-a0^2a2b1b2d2+2a0a1a2b1b2d2-a1^2a2b1b2d2
+c0a2^2b1b2d2-c1a2^2b1b2d2+a0a2^2b1b2d2-a1a2^2b1b2d2-2c0^2a1d0^2d2+4c0c1a1d0^2d2-2c1^2a1d0^2d2+2a1a2^2d0^2d2-c0a0^2b1b2
-c1a0^2b1b2+2c0a0a1b1b2+2c1a0a1b1b2-c0a1^2b1b2-c1a1^2b1b2+2c0a0a2b1b2+2c1a0a2b1b2+a0^2a2b1b2-2c0a1a2b1b2-2c1a1a2b1b2
-2a0a1a2b1b2+a1^2a2b1b2-c0a2^2b1b2-c1a2^2b1b2-2a0a2^2b1b2+2a1a2^2b1b2+a2^3b1b2+4c0a1a2d0^2+4c1a1a2d0^2-4a1a2^2d0^2
-4c0a0a2d0d1-4c0a1a2d0d1+4c0a2^2d0d1+2c0a0^2b2d2-4c0a0a1b2d2+2c0a1^2b2d2-2c0a2^2b2d2

X01 = -4c0a0b0b1d0d1+4c1a0b0b1d0d1-4c0a1b0b1d0d1+4c1a1b0b1d0d1+4c0a2b0b1d0d1-4c1a2b0b1d0d1-4a0a2b0b1d0d1-4a1a2b0b1d0d1+4a2^2b0b1d0d1
-2c0^2b0b1b2d2+4c0c1b0b1b2d2-2c1^2b0b1b2d2+2a0^2b0b1b2d2-4a0a1b0b1b2d2+2a1^2b0b1b2d2-4c0a2b0b1b2d2+4c1a2b0b1b2d2-4a0a2b0b1b2d2
+4a1a2b0b1b2d2+2c0^2b0d0^2d2-4c0c1b0d0^2d2+2c1^2b0d0^2d2+4c0a1b0d0^2d2-4c1a1b0d0^2d2-4a1a2b0d0^2d2-2a2^2b0d0^2d2+2c0^2b1d1^2d2
-4c0c1b1d1^2d2+2c1^2b1d1^2d2+4c0a0b1d1^2d2-4c1a0b1d1^2d2+4a0a2b1d1^2d2-2a2^2b1d1^2d2-4c0^2d0d1d2^2+8c0c1d0d1d2^2-4c1^2d0d1d2^2
+4a0a2d0d1d2^2+4a1a2d0d1d2^2+2c0^2b2d2^3-4c0c1b2d2^3+2c1^2b2d2^3-2a0^2b2d2^3+4a0a1b2d2^3-2a1^2b2d2^3-4c0a0b0b1b2-4c1a0b0b1b2
+4c0a1b0b1b2+4c1a1b0b1b2+4c0a2b0b1b2+4c1a2b0b1b2+4a0a2b0b1b2-4a1a2b0b1b2-4a2^2b0b1b2-4c0a1b0d0^2-4c1a1b0d0^2-4c0a2b0d0^2
-4c1a2b0d0^2+4a1a2b0d0^2+4a2^2b0d0^2+4c0a0b0d0d1+4c0a1b0d0d1-4c0a2b0d0d1+4a0a2b0d0d1+4a1a2b0d0d1-4a2^2b0d0d1-4c1a0b1d0d1
-4c1a1b1d0d1+4c1a2b1d0d1+4a0a2b1d0d1+4a1a2b1d0d1-4a2^2b1d0d1+4c0a0b1d1^2+4c1a0b1d1^2-4c0a2b1d1^2-4c1a2b1d1^2-4a0a2b1d1^2
+4a2^2b1d1^2+4c0a0b0b2d2-2a0^2b0b2d2-4c0a1b0b2d2+4a0a1b0b2d2-2a1^2b0b2d2+4c0a2b0b2d2+2a2^2b0b2d2+4c1a0b1b2d2-2a0^2b1b2d2
-4c1a1b1b2d2+4a0a1b1b2d2-2a1^2b1b2d2-4c1a2b1b2d2+2a2^2b1b2d2+8c1a1d0^2d2+8c0a2d0d1d2+8c1a2d0d1d2-8a0a2d0d1d2-8a1a2d0d1d2
-8c0a0d1^2d2+4a0^2b2d2^2-8a0a1b2d2^2+4a1^2b2d2^2-4c0a2b2d2^2-4c1a2b2d2^2

X02 = 4a0b0^2b1d0d1+4a1b0^2b1d0d1-4a2b0^2b1d0d1+4c0b0^2b1b2d2-4c1b0^2b1b2d2+4a0b0^2b1b2d2-4a1b0^2b1b2d2-4c0b0^2d0^2d2
+4c1b0^2d0^2d2+4a2b0^2d0^2d2-4c0b0b1d1^2d2+4c1b0b1d1^2d2-8a0b0b1d1^2d2+4a2b0b1d1^2d2+8c0b0d0d1d2^2-8c1b0d0d1d2^2-4a0b0d0d1d2^2
-4a1b0d0d1d2^2-4a2b0d0d1d2^2-4c0b0b2d2^3+4c1b0b2d2^3-4a0b0b2d2^3+4a1b0b2d2^3+8a0d1^2d2^3-4c0b0^2b1b2-4c1b0^2b1b2+4a2b0^2b1b2
+4c0b0^2d0^2+4c1b0^2d0^2-4a2b0^2d0^2-4a0b0^2d0d1-4a1b0^2d0d1+4a2b0^2d0d1-4a0b0b1d0d1-4a1b0b1d0d1+4a2b0b1d0d1+4c0b0b1d1^2
+4c1b0b1d1^2-4a2b0b1d1^2-4a0b0^2b2d2+4a1b0^2b2d2-4a2b0^2b2d2+8c1b0b1b2d2-4a0b0b1b2d2+4a1b0b1b2d2-4a2b0b1b2d2-8c1b0d0^2d2
-8c0b0d0d1d2-8c1b0d0d1d2+8a0b0d0d1d2+8a1b0d0d1d2+8a0b0d1^2d2-8c1b1d1^2d2+8a0b1d1^2d2+4c0b0b2d2^2+4c1b0b2d2^2+8a0b0b2d2^2
-8a1b0b2d2^2+4a2b0b2d2^2+16c1d0d1d2^2-16a0d1^2d2^2-8c1b2d2^3

X30 = c0^2c1b0b1b2+c0c1^2b0b1b2-c0^2c2b0b1b2+c1^2c2b0b1b2-c0c2^2b0b1b2-c1c2^2b0b1b2+c0^2a1b0b1b2-c1^2a1b0b1b2+2c0c2a1b0b1b2
+2c1c2a1b0b1b2-c0a1^2b0b1b2-c1a1^2b0b1b2-c0^2a2b0b1b2-2c0c1a2b0b1b2-2c1c2a2b0b1b2+c2^2a2b0b1b2+2c1a1a2b0b1b2-2c2a1a2b0b1b2
+a1^2a2b0b1b2+c0a2^2b0b1b2+c2a2^2b0b1b2-a1a2^2b0b1b2-c0^3b0b1d1-2c0^2c1b0b1d1-c0c1^2b0b1d1+c0^2c2b0b1d1+2c0c1c2b0b1d1+c1^2c2b0b1d1
+c0^2a1b0b1d1+2c0c1a1b0b1d1+c1^2a1b0b1d1+2c0^2a2b0b1d1+2c0c1a2b0b1d1-2c0c2a2b0b1d1-2c1c2a2b0b1d1-2c0a1a2b0b1d1-2c1a1a2b0b1d1
-c0a2^2b0b1d1+c2a2^2b0b1d1+a1a2^2b0b1d1+2c0^2c2b1d1^2-2c1^2c2b1d1^2+4c1c2a2b1d1^2-2c2a2^2b1d1^2+c0^3b0b2d2-c0^2c1b0b2d2
+2c0^2c2b0b2d2-2c0c1c2b0b2d2+c0c2^2b0b2d2-c1c2^2b0b2d2-2c0^2a1b0b2d2+2c0c1a1b0b2d2-2c0c2a1b0b2d2+2c1c2a1b0b2d2+c0a1^2b0b2d2
-c1a1^2b0b2d2-c0^2a2b0b2d2-2c0c2a2b0b2d2-c2^2a2b0b2d2+2c0a1a2b0b2d2+2c2a1a2b0b2d2-a1^2a2b0b2d2-4c0^2c2d1^2d2+4c0c1c2d1^2d2
+4c0c2a2d1^2d2-2c0^2c1b2d2^2+2c1c2^2b2d2^2-4c1c2a1b2d2^2+2c1a1^2b2d2^2+4c0^2c1d1d2^2-4c0c1c2d1d2^2-4c0c1a1d1d2^2

X31 = -4c0c1b0b1b2-2c1^2b0b1b2+4c0c2b0b1b2+2c2^2b0b1b2-4c0a1b0b1b2-4c2a1b0b1b2+2a1^2b0b1b2+4c0a2b0b1b2+4c1a2b0b1b2-2a2^2b0b1b2
+4c0^2b0b1d1+4c0c1b0b1d1-4c0c2b0b1d1-4c1c2b0b1d1-4c0a1b0b1d1-4c1a1b0b1d1-4c0a2b0b1d1+4c2a2b0b1d1+4a1a2b0b1d1-2c0^2b1d1^2
+2c1^2b1d1^2-4c0c2b1d1^2+4c1c2b1d1^2-4c1a2b1d1^2-4c2a2b1d1^2+2a2^2b1d1^2-4c0^2b0b2d2+4c0c1b0b2d2-4c0c2b0b2d2+4c1c2b0b2d2
+4c0a1b0b2d2-4c1a1b0b2d2+4c0a2b0b2d2+4c2a2b0b2d2-4a1a2b0b2d2+4c0^2d1^2d2-4c0c1d1^2d2+4c0c2d1^2d2-4c1c2d1^2d2-4c0a2d1^2d2
-4c2a2d1^2d2+2c0^2b2d2^2+4c0c1b2d2^2-4c1c2b2d2^2-2c2^2b2d2^2+4c1a1b2d2^2+4c2a1b2d2^2-2a1^2b2d2^2-4c0^2d1d2^2-4c0c1d1d2^2
+4c0c2d1d2^2+4c1c2d1d2^2+4c0a1d1d2^2+4c1a1d1d2^2-2c0^2b0b1+2c1^2b0b1+4c0a1b0b1+4c1a1b0b1-4c1a2b0b1-4a1a2b0b1+2a2^2b0b1
+2c0^2b0b2-2c2^2b0b2+4c2a1b0b2-2a1^2b0b2-4c0a2b0b2-4c2a2b0b2+4a1a2b0b2+2c1^2b1b2-2c2^2b1b2+4c2a1b1b2-2a1^2b1b2-4c1a2b1b2+2a2^2b1b2
-4c0^2b0d1+4c0c2b0d1+4c0a1b0d1+4c0a2b0d1-4c2a2b0d1-4a1a2b0d1-4c1^2b1d1+4c0c2b1d1+4c0a1b1d1+8c1a2b1d1-4a2^2b1d1+8c2a2d1^2+4c0^2b0d2
-4c0c1b0d2-4c0a1b0d2+4c1a1b0d2-4c0a2b0d2+4a1a2b0d2-4c0c1b2d2+4c2^2b2d2-8c2a1b2d2+4a1^2b2d2-4c0a2b2d2+8c0c1d1d2-8c0c2d1d2
-8c0a1d1d2+8c0a2d1d2-8c1a1d2^2

X32 = 4c1b0b1b2-4c2b0b1b2+4a1b0b1b2-4a2b0b1b2-4c0b0b1d1+4c2b0b1d1+4a1b0b1d1+4c0b1d1^2-4c1b1d1^2+4a2b1d1^2+4c0b0b2d2-4c1b0b2d2
-4a2b0b2d2-4c0d1^2d2+4c1d1^2d2+4a2d1^2d2-4c0b2d2^2+4c2b2d2^2-4a1b2d2^2+4c0d1d2^2-4c2d1d2^2-4a1d1d2^2+4c0b0b1-4c1b0b1-8a1b0b1
+4a2b0b1-4c0b0b2+4c2b0b2-4a1b0b2+8a2b0b2-4c1b1b2+4c2b1b2-4a1b1b2+4a2b1b2+4c0b0d1-4c2b0d1-4a1b0d1-4c0b1d1+8c1b1d1-4c2b1d1-4a1b1d1
-8a2b1d1-8a2d1^2-4c0b0d2+4c1b0d2+4a2b0d2+4c0b2d2+4c1b2d2-8c2b2d2+8a1b2d2+4a2b2d2-8c1d1d2+8c2d1d2+8a1d1d2-8a2d1d2+8a1d2^2
+8a1b0-8a2b0+8a1b1-8a2b2+16a2d1-16a1d2
\end{verbatim}
}

In order to obtain the formulas for $X_{1,j}$ and $X_{2,j}$, as described
in the proof of \Cref{thm:main-qi} one simply starts with the formulas for the
$X_{0,j}$, and applies a transposition to every variable index
occurring in the formula. For $Q_1$, all of the $0$s and $1$s are swapped,
while for $Q_2$ all of the $0$s and $2$s are swapped.

\end{document}